\newtheorem{theorem}{Theorem}
\newtheorem{proposition}[theorem]{Proposition}
\newtheorem{corollary}[theorem]{Corollary}
\newtheorem{definition}[theorem]{Definition}
\newtheorem{remark}[theorem]{Remark}
\newcommand{\R}{\mathbb{R}}
\newcommand{\T}{\mathbb{T}}
\newcommand{\Epi}{\mbox{Epi}}
\newcommand{\Hyp}{\mbox{Hyp}}
\newcommand{\Dom}{\mbox{Dom}}
\newcommand{\Graph}{\mbox{Graph}}
\newcommand{\Inte}{\mbox{Int}}
\begin{document}

\title{\LARGE \bf The contingent epiderivative and 
the calculus of variations on time scales\footnote{Submitted: 06/March/2010; 
revised: 12/May/2010; accepted: 03/July/2010; for publication 
in \emph{Optimization---A Journal of Mathematical Programming and Operations Research}.}}

\author{Ewa Girejko${}^{1, 2}$\\
\texttt{egirejko@ua.pt}
\and Agnieszka B. Malinowska${}^{1, 2}$\\
\texttt{abmalinowska@ua.pt}
\and Delfim F. M. Torres${}^{1, }$\thanks{Corresponding author.}\\
\texttt{delfim@ua.pt}}

\date{${}^1$Department of Mathematics\\
University of Aveiro\\
3810-193 Aveiro, Portugal\\[0.3cm]
${}^2$Faculty of Computer Science\\
Bia{\l}ystok University of Technology\\
15-351 Bia{\l}ystok, Poland}

\maketitle


\begin{abstract}
The calculus of variations on time scales is considered.
We propose a new approach to the subject that consists in applying a differentiation tool
called the \emph{contingent epiderivative}. It is shown that the contingent epiderivative
applied to the calculus of variations on time scales is very useful:
it allows to unify the delta and nabla approaches previously considered in the literature.
Generalized versions of the Euler--Lagrange necessary
optimality conditions are obtained, both for the basic problem
of the calculus of variations and isoperimetric problems.
As particular cases one gets the recent delta and nabla results.

\bigskip

\noindent \textbf{Keywords}: contingent derivative; contingent epiderivative;
calculus of variations; Euler--Lagrange equations;
time scales; delta and nabla calculi.

\medskip

\noindent \textbf{2010 Mathematics Subject Classification:}
49K05; 26E70; 34N05.
\end{abstract}


\section{Introduction}

The \emph{contingent epiderivative} was introduced by J.P.~Aubin (see~\cite{AuFr})
in order to differentiate real valued functions and, in general,
functions with extended values (functions with values in $\R\cup\{\pm\infty\}$).
This derivative belongs to the wide spectrum
of generalized differentiation tools, which are applied
in optimal control theory. For applications of the contingent epiderivative
to optimality conditions one can refer to
\cite{BeSo,JaKh,JaRa,Boris,RoSa} and references therein.
In order to define the contingent epiderivative
one has to be familiar with set-valued maps (maps that have sets as their values):
the definition of the contingent epiderivative is based on the
\emph{contingent derivative}~\cite{AuCe,AuFr} (also called \emph{graphical derivative})
that is a set-valued map that ``differentiates'' set-valued maps.
Namely, we associate with a function $f$ the set-valued map $F_{\uparrow}$ defined
by $F_{\uparrow}(x):=\langle f(x),+\infty)$, whose graph is the epigraph of $f$.
The graphs of the derivatives of such set-valued maps $F_{\uparrow}$ are
the epigraphs of functions which are called \emph{epiderivatives}.
The contingent derivative was also introduced and defined by Aubin
in \cite{AuFr}. In this paper we use such
generalized differentiation tools in order to
unify previous results on the calculus
of variations on time scales.

The mathematics of \emph{time scales} was
introduced by Aulbach and Hilger
as a tool to unify and extend the theories
of difference and differential equations \cite{A:B:O:P:02,A:H,SH}.
A time scale is a model of time,
and the new theory has found important applications in several fields
that require simultaneous modeling of discrete and continuous data,
in particular in the calculus of variations --- see
\cite{A:T,A:B:L:06,B:T:08,B:04,MIA,F:T:07,F:T:08,F:T:09,AM:T,AM:T2,basia:Leitmann,Basia:post_doc_Aveiro:2,NM:T,comNatalia}
and references therein. Looking to the above papers one concludes that
in the calculus of variations on time scales
the problems are formulated and the results are proved in terms
of the delta or the nabla calculus. Here we propose a more
general approach: we make use of contingent epiderivatives
to unify the two different approaches followed in the literature.

Let $\T$ be a given time scale with jump operators $\sigma$ and $\rho$,
and time scale derivatives $\Delta$ and $\nabla$.
We consider as the basic problem of the calculus
of variations to minimize or maximize
\begin{equation*}
 \mathcal{L}[y]
  = \int_a^b L(t, (y\circ \xi_u)(t),D_{\uparrow}\overline{y}(t)(u))d_ut
\end{equation*}
subject to boundary conditions $y(a)=\alpha$ and $y(b)=\beta$,
where $D_{\uparrow}$ denotes the contingent epiderivative,
\begin{equation*}
d_ut:=\\
\left\{
  \begin{array}{ll}
    u\Delta t, \text{ if $u\geq 0$} \\
    u\nabla t, \text{ if $u\leq 0$}\, ,
  \end{array}
\right.
\end{equation*}
and
\begin{equation*}
 y\circ \xi_u:=\\
  \left\{
  \begin{array}{ll}
    u(y\circ \sigma), \text{ if $u\geq 0$} \\
    u(y\circ\rho), \text{ if $u\leq 0$}.
  \end{array}
\right.
\end{equation*}
In the particular case $u = 1$ we obtain the delta variational problem
studied in \cite{B:04}; when $u = -1$ we obtain the nabla
variational problem studied in \cite{A:B:L:06} (see also \cite{NM:T}).
Moreover, we consider isoperimetric problems of the calculus of variations
on time scales: to minimize or maximize $\mathcal{L}[y]$
subject to boundary conditions $y(a)=\alpha$, $y(b)=\beta$,
and an additional constraint of the form
\begin{equation*}
\mathcal{K}[y]=\int_a^b G(t, (y\circ \xi_w)(t),D_{\uparrow}\overline{y}(t)(w))d_wt=K\, .
\end{equation*}
As particular cases we obtain the problem investigated
in \cite{F:T:09} by choosing $u = w= 1$; we obtain
the problem investigated in \cite{A:T} by choosing $u = w = -1$.

Main results of the paper give necessary optimality conditions
of Euler--Lagrange type for the basic and isoperimetric problems
that we propose in this paper. As direct corollaries
we obtain the previous delta and nabla results found in
\cite{A:T,A:B:L:06,B:04,F:T:09}.

The paper is organized as follows. In Section~\ref{sec:prelim} some
preliminary results on time scales are presented.
Our results are then given in Section~\ref{sec:direc deriv}.
Finally, we end with Section~\ref{sec:conc} of main
conclusions and some perspectives on future
directions.


\section{Preliminaries}
\label{sec:prelim}

In this section we introduce the basic definitions and results
that will be needed in the sequel. For a more general presentation
of the theory of time scales and detailed proofs,
we refer the reader to the books
\cite{B:P:01,B:P:03,Lak:book}.


\subsection{The delta calculus}

A {\it time scale} $\mathbb{T}$ is an arbitrary nonempty closed subset
of $\mathbb{R}$.  Besides standard cases of $\mathbb{R}$ (continuous time)
and $\mathbb{Z}$ (discrete time), many different models of time may be used,
\textrm{e.g.}, the $h$-numbers ($\mathbb{T}$ = $h\mathbb{Z}:=\{h z \ | \  z \in
\mathbb{Z}\}$, where $h>0$ is a fixed real number) and the
$q$-numbers ($\mathbb{T}$ = $q^{\mathbb{N}_0}:=\{q^k \ | \  k \in \mathbb{N}_0\}$,
where $q>1$ is a fixed real number). We assume that a time scale
$\mathbb{T}$ has the topology that it inherits from the real numbers
with the standard topology.

For each time scale $\mathbb{T}$ the following operators are used:

\begin{itemize}
\item the {\it forward jump operator} $\sigma:\mathbb{T} \rightarrow \mathbb{T}$,
defined by $\sigma(t):=\inf\{s \in \mathbb{T}:s>t\}$ for $t\neq\sup \mathbb{T}$
and $\sigma(\sup\mathbb{T})=\sup\mathbb{T}$ if $\sup\mathbb{T}<+\infty$;

\item the {\it backward jump operator} $\rho:\mathbb{T} \rightarrow \mathbb{T}$,
defined by $\rho(t):=\sup\{s \in \mathbb{T}:s<t\}$ for $t\neq\inf \mathbb{T}$
and $\rho(\inf\mathbb{T})=\inf\mathbb{T}$ if $\inf\mathbb{T}>-\infty$;

\item the {\it forward graininess function} $\mu:\mathbb{T} \rightarrow [0,\infty)$,
defined by $\mu(t):=\sigma(t)-t$;

\item the \emph{backward graininess function}
$\nu:\mathbb{T}\rightarrow[0,\infty)$, defined by
$\nu(t)=t - \rho(t)$.

\end{itemize}

A point $t\in\mathbb{T}$ is called \emph{right-dense},
\emph{right-scattered}, \emph{left-dense} or \emph{left-scattered}
if $\sigma(t)=t$, $\sigma(t)>t$, $\rho(t)=t$,
and $\rho(t)<t$, respectively. We say that $t$ is \emph{isolated}
if $\rho(t)<t<\sigma(t)$, that $t$ is \emph{dense} if $\rho(t)=t=\sigma(t)$.
If $\sup \mathbb{T}$ is finite and left-scattered, we define
$\mathbb{T}^\kappa := \mathbb{T}\setminus \{\sup\mathbb{T}\}$,
otherwise $\mathbb{T}^\kappa :=\mathbb{T}$.

\begin{definition}
\label{def:de:dif} We say that a function
$f:\mathbb{T}\rightarrow\mathbb{R}$ is \emph{delta differentiable}
at $t\in\mathbb{T}^{\kappa}$ if there exists a number
$f^{\Delta}(t)$ such that for all $\varepsilon>0$ there is a
neighborhood $U$ of $t$ (\textrm{i.e.},
$U=(t-\delta,t+\delta)\cap\mathbb{T}$ for some $\delta>0$) such that
$$|f(\sigma(t))-f(s)-f^{\Delta}(t)(\sigma(t)-s)|
\leq\varepsilon|\sigma(t)-s|,\mbox{ for all $s\in U$}.$$ We call
$f^{\Delta}(t)$ the \emph{delta derivative} of $f$ at $t$ and $f$ is
said to be \emph{delta differentiable} on $\mathbb{T}^{\kappa}$ provided
$f^{\Delta}(t)$ exists for all $t\in\mathbb{T}^{\kappa}$.
\end{definition}

\begin{remark}
If $t \in \mathbb{T} \setminus \mathbb{T}^\kappa$, then
$f^{\Delta}(t)$ is not uniquely defined, since for such a point $t$,
small neighborhoods $U$ of $t$ consist only of $t$ and, besides, we
have $\sigma(t) = t$. For this reason, maximal left-scattered points
are omitted in Definition~\ref{def:de:dif}.
\end{remark}

Note that in right-dense points $f^{\Delta} (t)=\lim_{s\rightarrow
t}\frac{f(t)-f(s)}{t-s}$, provided this limit exists, and in
right-scattered points
$f^{\Delta}(t)=\frac{f(\sigma(t))-f(t)}{\mu(t)}$,
provided $f$ is continuous at $t$.

For $f:\mathbb{T} \rightarrow X$, where $X$ is an arbitrary set,
we define $f^\sigma:=f\circ\sigma$.
For delta differentiable $f$ and $g$, the next formulas hold:
\begin{align*}
f^\sigma(t)&=f(t)+\mu(t)f^\Delta(t) \, ,\\
(fg)^\Delta(t)&=f^\Delta(t)g^\sigma(t)+f(t)g^\Delta(t)\\
&=f^\Delta(t)g(t)+f^\sigma(t)g^\Delta(t).
\end{align*}

In order to describe a class of functions that possess a delta
antiderivative, the following definition is introduced:

\begin{definition}
A function $f:\mathbb{T}\rightarrow\mathbb{R}$ is called \emph{regulated}
provided its right-sided limits exists (finite) at all right-dense points
in $\T$ and its left-sided limits exists (finite)
at all left-dense points in $\T$.
\end{definition}

\begin{definition}
A function $f:\mathbb{T}\rightarrow\mathbb{R}$ is called
\emph{rd-continuous} if it is continuous at right-dense points
in $\mathbb{T}$ and its left-sided limits exist (finite)
at left-dense points in $\mathbb{T}$.
\end{definition}

\begin{definition}
A function $F:\mathbb{T}\rightarrow\mathbb{R}$ is called a
\emph{delta antiderivative} of
$f:\mathbb{T}\rightarrow\mathbb{R}$ provided
$F^{\Delta}(t)=f(t)$ for all $t \in \mathbb{T}^\kappa$.
In this case we define the \emph{delta integral} of $f$ from $a$
to $b$ ($a,b \in \mathbb{T}$) by
\begin{equation*}
\int_{a}^{b}f(t)\Delta t:=F(b)-F(a) \, .
\end{equation*}
\end{definition}

\smallskip

\begin{theorem}(Theorem~1.74 in~\cite{B:P:01})
Every rd-continuous function has a delta
antiderivative. In particular, if $a \in \mathbb{T}$,
then the function $F$ defined by
$$
F(t)= \int_{a}^{t}f(\tau)\Delta\tau, \quad t \in \mathbb{T} \, ,
$$
is a delta antiderivative of $f$.
\end{theorem}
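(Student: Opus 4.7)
The plan is to establish the theorem in three stages: first reduce rd-continuity to the regulated case, then construct an antiderivative for a general regulated function via step-function approximation, and finally exploit rd-continuity to verify the delta-derivative identity pointwise on $\T^\kappa$. The observation that every rd-continuous $f$ is regulated is immediate from the definitions: continuity at a right-dense point $t$ supplies the finite right-sided limit $f(t)$, while the existence of left-sided limits at left-dense points is required by both notions.

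For the construction, on each compact $[\alpha,\beta]\cap\T$ a regulated function $f$ is a uniform limit of step functions $f_n$ (this uses only the two one-sided limit conditions and is the standard Darboux-type argument, transferred to the $\T$-topology). For a step function one writes down an explicit antiderivative $F_n$ piecewise: on each maximal subinterval where $f_n$ is constant one takes an affine function with slope equal to that constant value, and across every right-scattered point $t$ one enforces the forced jump $F_n(\sigma(t)) = F_n(t) + \mu(t)\, f_n(t)$. Normalising $F_n(\alpha)=0$, the uniform convergence of $f_n$ lifts to uniform convergence of $F_n$ to a continuous limit $F$ on $[\alpha,\beta]\cap\T$; piecing these together as $\alpha,\beta$ vary gives a continuous $F$ on all of $\T$.

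To verify that $F^\Delta(t)=f(t)$ for every $t\in\T^\kappa$: at a right-scattered $t$ the identity $F(\sigma(t))=F(t)+\mu(t)f(t)$ is inherited from the $F_n$ by uniform passage to the limit together with the continuity of $f$ at $t$ (guaranteed by rd-continuity, since a right-scattered point is left-dense or isolated). At a right-dense $t$ the rd-continuity of $f$ gives $\delta>0$ such that $|f(\tau)-f(t)|<\varepsilon$ for all $\tau\in(t-\delta,t+\delta)\cap\T$; one then writes
\[
F(\sigma(t))-F(s)-f(t)(\sigma(t)-s)
\]
as an integral of $f(\tau)-f(t)$ over the $\T$-interval between $s$ and $\sigma(t)=t$, whose absolute value is bounded by $\varepsilon|\sigma(t)-s|$. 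This is exactly the inequality in Definition~\ref{def:de:dif}. Once existence of some antiderivative $G$ is known, the function $F(t):=\int_a^t f(\tau)\Delta\tau=G(t)-G(a)$ is an antiderivative as well, proving the second assertion.

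The main obstacle is the step-function construction: on a general time scale one must keep track simultaneously of the affine pieces on the dense parts of $\T$ and of the forced jumps across right-scattered points, and check that these prescriptions are compatible at every transition (in particular at left-scattered endpoints of a dense block). Once this bookkeeping is done cleanly, the uniform approximation and the final $\varepsilon$--$\delta$ verification at right-dense points are routine.
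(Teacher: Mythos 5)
The paper does not actually prove this statement: it is imported verbatim as Theorem~1.74 of \cite{B:P:01}, so your proposal can only be compared with the proof in that reference. There the argument runs through \emph{pre-differentiability}: one first shows, via the induction principle for time scales, that every regulated function admits a pre-antiderivative (a continuous $F$ with $F^{\Delta}=f$ off an exceptional countable set containing no right-scattered points), and then uses rd-continuity to upgrade the identity to all of $\T^{\kappa}$. Your route --- uniform approximation of $f$ by step functions, explicit antiderivatives $F_n$ of the approximants, and a direct $\varepsilon$--$\delta$ verification for the limit $F$ --- is a genuinely different and viable strategy: it trades the induction principle for a constructive approximation argument, and the final verification at right-dense points is carried out exactly as you describe, by bounding the integral of $f(\tau)-f(t)$ and matching Definition~\ref{def:de:dif}.

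Two soft spots need attention. First, the whole weight of the theorem now rests on your asserted lemma that a regulated function on a compact time-scale interval is a uniform limit of step functions; as written (``the standard Darboux-type argument, transferred to the $\T$-topology'') this is a placeholder for precisely the step that the reference handles with the induction principle. The cleanest repair is to extend $f$ to the real interval by declaring it constant equal to $f(t)$ on each gap $(t,\sigma(t))$, check that the extension is regulated in the classical sense, apply the classical approximation theorem, and restrict the step approximants back to $\T$. Second, your justification at right-scattered $t$ is incorrect: rd-continuity does \emph{not} guarantee continuity of $f$ at a right-scattered, left-dense point (it only guarantees existence of the left-sided limit there), so ``continuity of $f$ at $t$'' is not available. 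Fortunately it is also not needed: the identity $F_n(\sigma(t))=F_n(t)+\mu(t)f_n(t)$ passes to the limit using only the uniform convergence $f_n\to f$, and continuity of the limit $F$ then gives $F^{\Delta}(t)=\bigl(F(\sigma(t))-F(t)\bigr)/\mu(t)=f(t)$. Relatedly, you should acknowledge that the $F_n$ are only antiderivatives of $f_n$ away from the right-dense jump points of $f_n$ (where $F_n$ has a corner and is not delta differentiable); this is harmless precisely because you verify $F^{\Delta}=f$ directly on the limit rather than trying to interchange limits and derivatives. With these repairs the argument is complete.
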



\subsection{The nabla approach}

In order to introduce the definition of nabla derivative, we define
a new set $\mathbb{T}_\kappa$ which is derived from $\mathbb{T}$ as
follows: if  $\mathbb{T}$ has a right-scattered minimum $m$, then
$\mathbb{T}_\kappa=\mathbb{T}\setminus\{m\}$; otherwise,
$\mathbb{T}_\kappa= \mathbb{T}$. In order to simplify expressions,
and similarly as done with composition with $\sigma$, we define
$f^{\rho}(t) := f(\rho(t))$.
We say that a function $f:\mathbb{T}\rightarrow\mathbb{R}$ is
\emph{nabla differentiable} at $t\in\mathbb{T}_\kappa$ if there is a
number $f^{\nabla}(t)$ such that for all $\varepsilon>0$ there
exists a neighborhood $U$ of $t$ (\textrm{i.e.},
$U=(t-\delta,t+\delta)\cap\mathbb{T}$ for some $\delta>0$) such that
$$|f^\rho(t)-f(s)-f^{\nabla}(t)(\rho(t)-s)|
\leq\varepsilon|\rho(t)-s|,\mbox{ for all $s\in U$}.$$ We call
$f^{\nabla}(t)$ the \emph{nabla derivative} of $f$ at $t$. Moreover,
we say that $f$ is \emph{nabla differentiable} on $\mathbb{T}$
provided $f^{\nabla}(t)$ exists for all $t \in \mathbb{T}_\kappa$.
If $f$ is nabla differentiable at $t$, then
$$f^\rho(t)=f(t)-\nu(t)f^\nabla(t) \, .$$

\begin{theorem}(Theorem~8.41 in \cite{B:P:01})
Suppose $f,g:\mathbb{T}\rightarrow\mathbb{R}$ are nabla
differentiable at $t\in\mathbb{T}_\kappa$. Then,
\begin{enumerate}
\item the sum $f+g:\mathbb{T}\rightarrow\mathbb{R}$ is nabla
differentiable at $t$ and
$$(f+g)^{\nabla}(t)=f^{\nabla}(t) + g^{\nabla}(t) \, ;$$

\item for any constant $\alpha$, $\alpha
f:\mathbb{T}\rightarrow\mathbb{R}$ is nabla differentiable at $t$
and $$(\alpha f)^{\nabla} (t)=\alpha f^{\nabla}(t)\, ;$$

\item the product $fg:\mathbb{T}\rightarrow\mathbb{R}$ is
    nabla differentiable at $t$ and
$$
\begin{array}{rcl}
(fg)^{\nabla}(t)& = & f^{\nabla}(t)g(t) + f^{\rho}(t)g^{\nabla}(t)\\
&=& f^{\nabla}(t)g^{\rho}(t)+ f(t)g^{\nabla}(t).
\end{array}
$$
\end{enumerate}
\end{theorem}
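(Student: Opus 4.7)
The plan is to prove each item directly from the nabla differentiability definition, following the template of the classical delta versions but with $\rho$, $\nu$, and $f^\rho$ in place of $\sigma$, $\mu$, and $f^\sigma$. Throughout, fix $t\in\T_\kappa$ and $\varepsilon>0$, and recall that nabla differentiability at $t$ implies continuity of the function at $t$ in the time-scale topology (this will be needed only for the product rule).

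For the sum rule I would apply the definition of $f^\nabla(t)$ and $g^\nabla(t)$ with tolerance $\varepsilon/2$ to obtain neighborhoods $U_f$ and $U_g$ of $t$, set $U=U_f\cap U_g$, and bound
\[
\bigl|(f+g)^\rho(t)-(f+g)(s)-\bigl(f^\nabla(t)+g^\nabla(t)\bigr)(\rho(t)-s)\bigr|
\]
by the triangle inequality, which gives $\varepsilon|\rho(t)-s|$ on $U$. For the scalar rule, I would handle $\alpha=0$ trivially and, for $\alpha\neq 0$, apply the definition of $f^\nabla(t)$ with tolerance $\varepsilon/|\alpha|$ and multiply through by $|\alpha|$.

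The product rule is the real work. I would establish the first form and then derive the second one from the identity $f^\rho(t)=f(t)-\nu(t)f^\nabla(t)$. The key algebraic step is the add-and-subtract decomposition
\[
(fg)^\rho(t)-(fg)(s)
=f^\rho(t)\bigl[g^\rho(t)-g(s)\bigr]+g(s)\bigl[f^\rho(t)-f(s)\bigr].
\]
Subtracting the candidate linearization $\bigl[f^\nabla(t)g(t)+f^\rho(t)g^\nabla(t)\bigr](\rho(t)-s)$ produces three pieces: one controlled by $f^\nabla$'s definition (multiplied by the bounded factor $|g(s)|$ near $t$), one controlled by $g^\nabla$'s definition (multiplied by the constant $|f^\rho(t)|$), and a remainder of the shape $f^\nabla(t)\bigl[g(s)-g(t)\bigr](\rho(t)-s)$. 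To kill the last piece I would shrink $U$ further using continuity of $g$ at $t$ (which follows from nabla differentiability), so that $|g(s)-g(t)|$ is small on $U$. Tuning the three tolerances in terms of $\varepsilon$, $|f^\rho(t)|$, $|f^\nabla(t)|$, and a local bound for $|g(s)|$ yields the desired $\varepsilon|\rho(t)-s|$ estimate.

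The second form of the product rule would then be obtained by substituting $g(t)=g^\rho(t)+\nu(t)g^\nabla(t)$ into $f^\nabla(t)g(t)+f^\rho(t)g^\nabla(t)$ and using $f^\rho(t)=f(t)-\nu(t)f^\nabla(t)$; the $\nu(t)$-terms rearrange to give $f^\nabla(t)g^\rho(t)+f(t)g^\nabla(t)$. The main obstacle is the bookkeeping in the product-rule estimate, specifically choosing the neighborhood small enough that the continuity-driven remainder and the two linearization errors all fit together into a single $\varepsilon|\rho(t)-s|$ bound; the algebraic identities and the sum/scalar cases are essentially mechanical.
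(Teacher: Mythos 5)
Your proof is correct, but note that the paper itself does not prove this statement: it is quoted verbatim as Theorem~8.41 of the cited book of Bohner and Peterson, so there is no in-paper argument to compare against. What you propose is precisely the standard textbook proof (the nabla mirror of the delta rules in that reference): the $\varepsilon/2$ splitting for the sum, the $\varepsilon/|\alpha|$ rescaling for scalars, the add-and-subtract decomposition $(fg)^\rho(t)-(fg)(s)=f^\rho(t)[g^\rho(t)-g(s)]+g(s)[f^\rho(t)-f(s)]$ together with continuity of $g$ at $t$ to absorb the remainder $f^\nabla(t)[g(s)-g(t)](\rho(t)-s)$ and to bound $|g(s)|$ locally, and the substitution $f^\rho(t)=f(t)-\nu(t)f^\nabla(t)$, $g(t)=g^\rho(t)+\nu(t)g^\nabla(t)$ to pass between the two forms of the product rule. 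The only step you lean on without proof is that nabla differentiability at $t$ implies continuity at $t$; that is itself a standard lemma (proved by applying the defining inequality at $s=t$ and at general $s$ and using $|\rho(t)-s|\leq\nu(t)+|t-s|$), so your outline is complete.
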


\begin{definition}
A function $F:\mathbb{T}\rightarrow\mathbb{R}$ is called a
\emph{nabla antiderivative} of $f:\mathbb{T}\rightarrow\mathbb{R}$
provided $F^{\nabla}(t)=f(t)$ for all $t \in \mathbb{T}_\kappa$.
In this case we define the \emph{nabla integral} of $f$ from $a$ to
$b$ ($a,b \in \mathbb{T}$) by
$$
\int_{a}^{b}f(t)\nabla t:=F(b)-F(a) \, .
$$
\end{definition}

In order to exhibit a class of functions that possess a nabla
antiderivative, the following definition is introduced.

\begin{definition}
Let $\mathbb{T}$ be a time scale,
$f:\mathbb{T}\rightarrow\mathbb{R}$. We say that function $f$ is
\emph{ld-continuous} if it is continuous at left-dense points
and its right-sided limits exist (finite) at all right-dense points.
\end{definition}

\begin{theorem}(Theorem~8.45 in \cite{B:P:01})
Every ld-continuous function has a nabla
antiderivative. In particular, if $a \in \mathbb{T}$, then the
function $F$ defined by
$$
F(t)= \int_{a}^{t}f(\tau)\nabla\tau, \quad t \in \mathbb{T} \, ,
$$
is a nabla antiderivative of $f$.
\end{theorem}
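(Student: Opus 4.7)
The strategy is to exploit the duality between the delta and nabla calculi via time-scale reflection, reducing the nabla existence result to the already-stated delta analogue, namely Theorem~1.74 of \cite{B:P:01}. Alternatively one could argue directly by distinguishing left-dense and left-scattered points, but going through duality is cleaner because essentially no new analysis is required.

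First, I would introduce the reflected time scale $\tilde{\mathbb{T}} := \{-t : t \in \mathbb{T}\}$ and the reflected function $\tilde{f} : \tilde{\mathbb{T}} \to \mathbb{R}$ defined by $\tilde{f}(s) := f(-s)$. A short verification gives the dictionary: the forward jump operator on $\tilde{\mathbb{T}}$ satisfies $\tilde{\sigma}(s) = -\rho(-s)$, the forward graininess is $\tilde{\mu}(s) = \nu(-s)$, and the map $s \mapsto -s$ sends right-dense (resp.\ right-scattered) points of $\tilde{\mathbb{T}}$ to left-dense (resp.\ left-scattered) points of $\mathbb{T}$. In particular, $f$ is ld-continuous on $\mathbb{T}$ if and only if $\tilde{f}$ is rd-continuous on $\tilde{\mathbb{T}}$, and the two sets $\tilde{\mathbb{T}}^{\kappa}$ and $\mathbb{T}_{\kappa}$ correspond to each other under reflection.

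Next, applying Theorem~1.74 of \cite{B:P:01} to $\tilde{f}$ on $\tilde{\mathbb{T}}$ furnishes a delta antiderivative $\tilde{F} : \tilde{\mathbb{T}} \to \mathbb{R}$, which may be taken as $\tilde{F}(s) = \int_{-a}^{s} \tilde{f}(\tau)\,\Delta\tau$. Define $F : \mathbb{T} \to \mathbb{R}$ by $F(t) := -\tilde{F}(-t)$. The defining estimate in Definition~\ref{def:de:dif}, transported under the change of variable $s = -t$, yields the standard identity $F^{\nabla}(t) = \tilde{F}^{\Delta}(-t)$, so that $F^{\nabla}(t) = \tilde{f}(-t) = f(t)$ for every $t \in \mathbb{T}_{\kappa}$. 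Hence $F$ is a nabla antiderivative of $f$ on $\mathbb{T}$. A direct comparison of Riemann-type partial sums, or equivalently the fact that any two nabla antiderivatives differ by a constant, shows that $t \mapsto \int_{a}^{t} f(\tau)\,\nabla\tau$ differs from $F$ by $F(a)$, and therefore is itself a nabla antiderivative that vanishes at $a$.

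The main obstacle is not analytic but notational: one must set up the reflection dictionary carefully so that every structure used in the statement (jump operator $\rho$, graininess $\nu$, ld-continuity, nabla integral) transports to its delta counterpart on $\tilde{\mathbb{T}}$. Once this bookkeeping is checked on each of the possible point types (isolated, left-dense right-dense, left-dense right-scattered, left-scattered right-dense), the conclusion follows immediately from Theorem~1.74. The only substantive analytic content, namely producing a pre-antiderivative of an rd-continuous function via approximation by step functions, is fully absorbed into that theorem.
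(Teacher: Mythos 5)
Your argument is correct, but note that the paper does not actually prove this statement: it is quoted verbatim as Theorem~8.45 of the Bohner--Peterson book and used as a black box in the preliminaries, so there is no in-paper proof to match. Your reduction via the reflected time scale $\tilde{\mathbb{T}}=\{-t:t\in\mathbb{T}\}$ is the standard duality route, and the dictionary you record ($\tilde{\sigma}(s)=-\rho(-s)$, $\tilde{\mu}(s)=\nu(-s)$, ld-continuity of $f$ equivalent to rd-continuity of $\tilde f$, $\tilde{\mathbb{T}}^{\kappa}$ corresponding to $\mathbb{T}_{\kappa}$, and $F^{\nabla}(t)=\tilde F^{\Delta}(-t)$ for $F(t)=-\tilde F(-t)$) all checks out against the $\varepsilon$-definitions given in the paper; this cleanly imports the analytic content of Theorem~1.74. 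The cited source instead develops the nabla statement by running the delta construction again mutatis mutandis; your version trades that repetition for the bookkeeping of the reflection, which is a fair exchange and arguably more illuminating. Two small remarks: the closing appeal to ``Riemann-type partial sums'' is out of place in this framework, since the nabla integral here is \emph{defined} by $\int_a^b f(t)\nabla t:=F(b)-F(a)$ for an antiderivative $F$, so once one antiderivative exists the function $t\mapsto\int_a^t f(\tau)\nabla\tau=F(t)-F(a)$ is an antiderivative by inspection (and the implicit well-definedness of the integral, i.e.\ that two nabla antiderivatives differ by a constant, also transports under the same reflection); and you should state explicitly that $\tilde{\mathbb{T}}$ is again a nonempty closed subset of $\mathbb{R}$, hence a time scale, so that Theorem~1.74 applies to it.
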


For more on the nabla calculus we refer the reader to
\cite[Chap.~3]{B:P:03}.


\section{Main Results}
\label{sec:direc deriv}

In this section, which is the main part of the paper,
we present a unified Euler--Lagrange equation
and unified necessary optimality conditions for
isoperimetric problems on time scales.
The differentiation tool we use for such unification
is the contingent epiderivative.
We consider $\T$ to be a given time scale with
$\inf\T:=a$, $\sup\T:=b$, and $I:=[a,b]\cap\T$ for $[a,b]\subset\R$.
Moreover, by $I^{\kappa}_{\kappa}$ (or $\T^{\kappa}_{\kappa}$)
we mean $I^{\kappa}_{\kappa}:=I^{\kappa}\cap I_{\kappa}$
(respectively, $\T^{\kappa}_{\kappa}:=\T^{\kappa}\cap \T_{\kappa}$).


\subsection{Contingent epiderivatives}
\label{sec:4}

A \emph{set-valued map} (\emph{multifunction}) $F:\R\rightarrow2^{\R}$
is a map that has sets as its values. We denote it by $F:\R\twoheadrightarrow\R$.
Every set-valued map $F:\R\twoheadrightarrow\R$ is characterized
by its \emph{graph}, $\Graph(F)$, as the subset of $\R^2$ defined by
\begin{equation*}
\Graph(F):=\{(x,y)\in\R^2: y\in F(x)\}\,.
\end{equation*}
We shall say that $F(x)$ is the \emph{image} or the \emph{value} of $F$ at $x$.
A set-valued map is said to be \emph{nontrivial} if its graph is not empty,
\textrm{i.e.}, if there exists an element $x\in\R$
such that $F(x)$ is not empty. The \emph{domain} of $F$
is the set of elements $x\in\R$ such that $F(x)$ is not empty:
$\Dom(F)=\{x\in\R: F(x)\neq \emptyset\}$.
The \emph{image} of $F$ is the union of the images (or values) $F(x)$,
when $x$ ranges over $\R$: $\mbox{Im}(F)=\bigcup_{x\in\R}F(x)$.

\begin{definition} \label{def:epi} Let $X$ be any nonempty subset of $\R$.
By \emph{the epigraph of} $f:X\rightarrow\R$, denoted by $\Epi(f)$,  we mean the following set:
\begin{equation*}
\Epi(f):=\{(t,\lambda)\in X\times \R : f(t)\leq \lambda \}\, .
\end{equation*}
\emph{The hypograph of} $g:X\rightarrow \R$ is defined in the symmetric way:
$\Hyp(g):=\{(t,\lambda)\in X\times \R : g(t)\geq \lambda \}$.
\end{definition}

\begin{remark}
Note that $\Epi(f)\cap\Hyp(f)=\Graph(f)$.
\end{remark}

If $X=\T$ is a time scale, then we can use the same definition
of epigraph by introducing an appropriate extension of the epigraph
of a function  $f:\T\rightarrow\R$.
Let $G(f)$ denote the following set:
\begin{equation*}
G(f)=\bigcup_{t\in \T}\left\{\alpha(t,y)+\beta(\sigma(t),z):
\, y\geq f(t), z\geq f^{\sigma}(t)\right\}\, ,
\end{equation*}
where $\alpha+\beta=1, \alpha, \beta \geq 0$.

\begin{remark}
It is easy to see that $G(f)\subset\overline{conv} \Epi(f)$,
where $\overline{conv} \Epi(f)$ is the closure of the convex hull of $\Epi(f)$.
\end{remark}

Using  the formulation of $G(f)$  we can assign to $f:I\rightarrow\R$
the function $\overline{f}: [a,b]\rightarrow \R$ defined by the formula
\begin{equation}
\label{eq:epi}
\Epi(\overline{f})=G(f).
\end{equation}
Let us notice that for $f,g:I\rightarrow \R$
and $a,b\in\R$ it holds:
$a\overline{f}+b\overline{g}=\overline{af+bg}$.

\begin{remark}\label{rem:epi}
Note that function $\overline{f}$ defined by formula \eqref{eq:epi} can be presented
in the following way (see, \textrm{e.g.}, \cite{Di}):
\[\overline{f}(t)=
\begin{cases}
f(t), &\text{if $t\in \T$}\\
f(s)+\frac{f(\sigma(s))-f(s)}{\mu(s)}(t-s), &\text{if $t\in(s,\sigma(s)),$ }
\end{cases}\]
 where $s\in \T$ and $s$ is right-scattered or
\[\overline{f}(t)=
\begin{cases}
f(t), &\text{if $t\in \T$}\\
f(s)+\frac{f(s)-f(\rho(s))}{\nu(s)}(t-s), &\text{if $t\in(\rho(s),s),$}
\end{cases}\]
where $s\in \T$ and $s$ is left-scattered.
\end{remark}

We recall a general definition of the contingent cone to a subset of $\R^2$.
This theory is presented for a subset of any normed space $X$ in \cite{AuFr}.

\begin{definition}[\cite{AuFr}]
Let $K\subset \R^2$ and  point  $p\in\overline{K}$ belong to the closure of $K$.
The contingent cone $T_K(p)$ is defined by
\begin{equation*}
T_K(p):=\{v: \liminf _{h\rightarrow 0+} d(p+hv,K)/h=0\}\,,
\end{equation*}
where $d(q,K)=\inf\limits_{k\in K}d(q,k)$ is the distance between
point $q=p+hv$ and set $K$.
\end{definition}

\begin{remark}[\cite{AuFr}]
$T_K(p)$ is a closed set.
\end{remark}
The following characterization
of the contingent cone in terms of sequences is very convenient:
$v\in T_K(p)$ if and only if
there exists $h_n\rightarrow 0^+$ and
$v_n\rightarrow v$ such that for all
$n$, $p+h_nv_n\in K$.

\begin{remark}[\cite{AuFr}]
If $p\in \Inte(K)$, then $T_K(p)=\R^2$.
\end{remark}

\begin{definition}[\cite{AuFr}]
Let $F:\R\twoheadrightarrow\R$ be a set-valued map.
The \emph{contingent derivative of $F$ at $p\in\Graph(F)$}, denoted by  $DF(p)$,
is the set-valued map from $\R$ to $\R$ defined by
\[\Graph\left(DF(p)\right):=T_{\Graph(F)}(p)\,. \]
For $F:=f$ a single-valued function, we set
$$Df(x):=Df(x,f(x)).$$
\end{definition}

Let us  point  out  that
\begin{equation*}
\Graph\left(DF(p)\right)=\{u=(u_1,u_2): u_2\in DF(p)(u_1)\}
\end{equation*}
and $\ u\in T_{Graph(F)}(p)$.
In \cite{AuFr} one can find the following characterization
of the contingent derivative:

\begin{proposition}[\textrm{cf.} \cite{AuFr}, Proposition 5.1.4, p.186]
Let $F:\R\twoheadrightarrow\R$ be a set-valued
map and let $p=(x,y)\in \Graph(F)$. Then,
\[v\in DF(p)(u)\Leftrightarrow \liminf_{h\rightarrow 0+,
u'\rightarrow u} d\left(v, \frac{F(x+hu')-y}{h}\right)=0\,.\]
If $p\in\mbox{Int}\left(\Dom(F)\right)$ and $F$ is Lipschitz around $p$, then
\[v\in DF(p)(u)\Leftrightarrow \liminf_{h\rightarrow 0+}
d\left(v, \frac{F(x+hu)-y}{h}\right)=0\,.\]
\end{proposition}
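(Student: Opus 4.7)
The plan is to unpack the definition $v\in DF(p)(u) \Leftrightarrow (u,v)\in T_{\Graph(F)}(p)$ via the sequential characterization of the contingent cone recorded just before the proposition, and then translate that sequential statement into the $\liminf$ formula. First I would prove the general equivalence. Suppose $v\in DF(p)(u)$. Then there exist $h_n\to 0^+$ and $(u_n,v_n)\to (u,v)$ with $p+h_n(u_n,v_n)\in\Graph(F)$, i.e.\ $y+h_nv_n\in F(x+h_nu_n)$. Writing $w_n:=v_n$, we have $w_n\in\frac{F(x+h_nu_n)-y}{h_n}$, so $d\bigl(v,\frac{F(x+h_nu_n)-y}{h_n}\bigr)\le |v-v_n|\to 0$, which yields $\liminf_{h\to 0^+,u'\to u}d\bigl(v,\frac{F(x+hu')-y}{h}\bigr)=0$. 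Conversely, if this $\liminf$ vanishes, choose $h_n\to 0^+$, $u_n\to u$, and $w_n\in\frac{F(x+h_nu_n)-y}{h_n}$ with $|v-w_n|\to 0$. Then $w_n\to v$ and $y+h_nw_n\in F(x+h_nu_n)$, so $(u,v)\in T_{\Graph(F)}(p)$ by the same sequential characterization.

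For the second equivalence, the direction $\Leftarrow$ is automatic since the single-variable $\liminf$ dominates the joint one: taking $u'\equiv u$ is an admissible choice, hence
\begin{equation*}
\liminf_{h\to 0^+,u'\to u}d\!\left(v,\frac{F(x+hu')-y}{h}\right)\le \liminf_{h\to 0^+}d\!\left(v,\frac{F(x+hu)-y}{h}\right),
\end{equation*}
so vanishing of the RHS forces vanishing of the LHS, and the first equivalence gives $v\in DF(p)(u)$. The nontrivial direction is $\Rightarrow$, and this is where the Lipschitz hypothesis is used. Given $v\in DF(p)(u)$, extract sequences $h_n\to 0^+$, $u_n\to u$, $v_n\to v$ with $y+h_nv_n\in F(x+h_nu_n)$. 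Because $p\in\Inte(\Dom(F))$ and $F$ is Lipschitz around $p$ with some constant $L$, for $n$ large enough both $x+h_nu_n$ and $x+h_nu$ lie in the Lipschitz neighborhood, hence there exists $z_n\in F(x+h_nu)$ with $|(y+h_nv_n)-z_n|\le L\,|h_nu_n-h_nu|=Lh_n|u_n-u|$. Set $\tilde v_n:=(z_n-y)/h_n\in\frac{F(x+h_nu)-y}{h_n}$; then $|\tilde v_n-v_n|\le L|u_n-u|\to 0$, so $\tilde v_n\to v$, and $d\bigl(v,\frac{F(x+h_nu)-y}{h_n}\bigr)\to 0$, proving the $\liminf$ formula with $u'\equiv u$.

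The main obstacle I anticipate is the second direction of the second equivalence, specifically the correct invocation of Lipschitz continuity for set-valued maps. One must fix an appropriate notion of Lipschitzness (Hausdorff-Lipschitz or the pseudo-Lipschitz property of Aubin), ensure that the argument points $x+h_nu_n$ and $x+h_nu$ eventually fall inside the neighborhood of $x$ where the estimate applies (this is where $p\in\Inte(\Dom(F))$ is needed, so that $F$ takes nonempty values on that neighborhood), and then select $z_n\in F(x+h_nu)$ realizing the Hausdorff estimate up to any desired error. Once this selection is in place the limit computation is routine. Everything else reduces to rewriting $(u,v)\in T_{\Graph(F)}(p)$ via sequences and matching it with distances to difference quotients of $F$.
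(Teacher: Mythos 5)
Your argument is correct: the two equivalences follow exactly as you describe, by unpacking $v\in DF(p)(u)\Leftrightarrow (u,v)\in T_{\Graph(F)}(p)$ through the sequential characterization of the contingent cone, and then using the Lipschitz estimate to replace $x+h_nu_n$ by $x+h_nu$ in the difference quotient. Note that the paper itself offers no proof of this proposition --- it is recalled verbatim from Aubin and Frankowska (Proposition 5.1.4) --- and your argument is essentially the standard one found there, so there is nothing in the paper to compare it against.
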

Let $X\subset\R$, possibly a time scale, and let us consider
an extended function $f: X\rightarrow \R\cup\{\pm\infty\}$
whose domain is $\Dom (f)=\{t\in X: f(t)\neq \pm \infty\}$.
We call an extended function \emph{nontrivial} if $\Dom (f)\neq\emptyset$.
To introduce elements of the theory of contingent epiderivatives
for functions on time scales we define two set-valued maps
$F_{\uparrow}: X\twoheadrightarrow\R$
and $F_{\downarrow}: X\twoheadrightarrow\R$, both corresponding
to $f:X\rightarrow\R$,  in the following way (see \cite{AuFr}):

\begin{definition}\label{def:mult_f}
Let $X\subset \R$ and $f: X\rightarrow \R\cup\{\pm \infty\}$
be an extended function  with nonempty domain $\Dom (f)\neq\emptyset$. Then \\
a) $F_{\uparrow}(t)=\left\{\begin{array}{ll}f(t)+\R_+
&  \mbox{if} \ t\in \Dom(f)\\
\emptyset  & \mbox{if} \ f(t)=+\infty\\
\R & \mbox{if} \ f(t)=-\infty
\end{array}\right.$, \\
b) $F_{\downarrow}(t)=\left\{\begin{array}{ll}f(t)-\R_+
&  \mbox{if} \ t\in \Dom(f)\\
\emptyset  & \mbox{if} \ f(t)=-\infty\\
\R & \mbox{if} \ f(t)=+\infty
\end{array}\right.$,\\
where $f(t)+\R_{+}=\{\lambda: f(t)\leq \lambda\}$
and $f(t)-\R_{+}=\{\lambda: f(t)\geq \lambda\}$.

\end{definition}
\begin{remark}
It is easy to prove that $\Graph(F_{\uparrow})=\Epi(f)
=\bigcup_t\{(t,y): y\in F_{\uparrow}(t) \}$
and $\Graph(F_{\downarrow})=\Hyp(f)$.
\end{remark}
With set-valued functions $F_{\uparrow}$
and $F_{\downarrow}$ one can associate their contingent derivatives.
We naturally have that values of the contingent derivative
of $F_{\uparrow}: \R\twoheadrightarrow\R$
are half lines in  the sense that  $\forall$ $\lambda\geq f(t)$,
$\forall$ $u\in \Dom\left(DF_{\uparrow}(t,\lambda)\right)$,
it holds:
\[DF_{\uparrow}(t,\lambda)(u)=DF_{\uparrow}(t,\lambda)(u)+\R_+ \,.\]

\begin{definition}[\cite{AuFr}]
\label{def:contepider}
Let $f:X\rightarrow\R\cup\{\pm\infty\}$ be a function
with extended values and nonempty domain and let $t\in \Dom(f)$.
We say that the extended function $D_{\uparrow}f(t):\R\rightarrow\R\cup\{\pm\infty\}$ defined by
\[ D_{\uparrow}f(t)(u):=\inf\{v: v\in DF_{\uparrow}(t, f(t))(u)\}  \, ,\]
where $F_\uparrow$ is given by Definition~\ref{def:mult_f} and $u\in\R$,
is the \emph{contingent epiderivative of $f$ at $t$ in the direction $u$}.
The function $f$ is said to be \emph{contingently epidifferentiable at $t$} if
its contingent epiderivative never takes the value $-\infty$.
If  $DF_{\uparrow}(t, f(t))(u)=\emptyset$ then we set  $D_{\uparrow}f(t)(u)=+\infty$.
\end{definition}

\begin{remark}[\cite{AuFr}]
Notice that for a function $f:X\rightarrow\R\cup\{\pm\infty\}$
with nonempty domain the following conditions are equivalent:
 \begin{itemize}
   \item[(i)] $f$ is contingently epidifferentiable at $t\in \Dom (f)$;
   \item[(ii)] $D_{\uparrow}f(t)(0)=0$.
 \end{itemize}
 \end{remark}

In \cite{AuCe} one can find the following characterization
of the contingent epiderivative for some particular situations:

a) $DF_{\uparrow}(t, f(t))(u)=\R$ iff $D_{\uparrow}f(t)(u)=-\infty$;

b) $DF_{\uparrow}(t, f(t))(u)=[v_0,+\infty)$ iff $D_{\uparrow}f(t)(u)=v_0$.

Now let associate with $f:I\rightarrow\R$ the function
$\overline{f}:[a,b]\rightarrow\R\cup\{\pm\infty\}$ defined by formula \eqref{eq:epi}.
Let $\overline{F}_{\uparrow}$ denote the corresponding multifunction
for $\overline{f}$ according to Definition~\ref{def:mult_f}.

\begin{proposition}
\label{prop:2}
For all $p\in \Graph\left(F_{\uparrow}\right)$ and for each $u\in\R$  we have
\begin{equation}
\label{eq:prop12}
DF_{\uparrow}(p)(u)\subset  D\overline{F}_{\uparrow}(p)(u)\,.
\end{equation}
\end{proposition}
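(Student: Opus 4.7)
The plan is to reduce the claimed pointwise inclusion of set-valued maps to a set-theoretic inclusion of graphs, and then to invoke the monotonicity of the contingent cone with respect to set inclusion.

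First I would observe that, by construction, $\Graph(F_{\uparrow})=\Epi(f)$ and $\Graph(\overline{F}_{\uparrow})=\Epi(\overline{f})=G(f)$. The key containment $\Epi(f)\subset G(f)$ follows immediately from the definition of $G(f)$: given any $(t,y)\in\Epi(f)$, so that $t\in\T$ and $y\geq f(t)$, one takes $\alpha=1$, $\beta=0$, and $z=f^{\sigma}(t)$, yielding $(t,y)=\alpha(t,y)+\beta(\sigma(t),z)\in G(f)$. Alternatively, this can be read off directly from Remark~\ref{rem:epi}, since $\overline{f}(t)=f(t)$ for $t\in\T$. Consequently $\Graph(F_{\uparrow})\subset\Graph(\overline{F}_{\uparrow})$, and any $p\in\Graph(F_{\uparrow})$ belongs to both graphs.

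Next I would use the sequential characterization of the contingent cone stated just after its definition: $v\in T_K(p)$ iff there exist $h_n\to 0^+$ and $v_n\to v$ with $p+h_nv_n\in K$ for all $n$. From this it is immediate that $K_1\subset K_2$ implies $T_{K_1}(p)\subset T_{K_2}(p)$ for every $p\in K_1$. Applying this monotonicity with $K_1=\Graph(F_{\uparrow})$ and $K_2=\Graph(\overline{F}_{\uparrow})$, we obtain
\begin{equation*}
\Graph\bigl(DF_{\uparrow}(p)\bigr)=T_{\Graph(F_{\uparrow})}(p)\subset T_{\Graph(\overline{F}_{\uparrow})}(p)=\Graph\bigl(D\overline{F}_{\uparrow}(p)\bigr).
\end{equation*}

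Finally, I would translate this graph inclusion into the desired pointwise inclusion: for $v\in DF_{\uparrow}(p)(u)$ one has $(u,v)\in\Graph(DF_{\uparrow}(p))\subset\Graph(D\overline{F}_{\uparrow}(p))$, hence $v\in D\overline{F}_{\uparrow}(p)(u)$, proving \eqref{eq:prop12}. There is no real obstacle here; the only point worth care is the very first one, namely justifying $\Epi(f)\subset G(f)$ through a valid choice of the convex-combination parameters $\alpha,\beta$, since everything else is formal.
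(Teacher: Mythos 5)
Your proposal is correct and follows essentially the same route as the paper: both arguments rest on the graph inclusion $\Graph(F_{\uparrow})\subset\Graph(\overline{F}_{\uparrow})$, the monotonicity of the contingent cone under set inclusion, and the translation of the resulting graph inclusion into the pointwise statement. You merely spell out two steps the paper leaves implicit (the verification that $\Epi(f)\subset G(f)$ via the choice $\alpha=1$, $\beta=0$, and the sequential justification of the cone monotonicity), which is a welcome but not substantively different elaboration.
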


\begin{proof}
Since $\Graph(F_{\uparrow})\subset \Graph(\overline{F}_{\uparrow})$
and $\Graph\left(DF_{\uparrow}(p)\right)=T_{\Graph(F_{\uparrow})}(p)$, it follows that
$\Graph(DF_{\uparrow}(p))\subset \Graph(D\overline{F}_{\uparrow}(p))$.
Let $v\in DF_{\uparrow}(p)(u)$. Then $(u,v)\in \Graph\left(DF_{\uparrow}(p)\right)
\subset \Graph\left(D\overline{F}_{\uparrow}(p)\right)$ and we get that $v\in D\overline{F}_{\uparrow}(p)(u)$.
\end{proof}

\begin{remark}
Inclusion (\ref{eq:prop12}) is also satisfied when both sets are empty.
It means then that the contingent epiderivative $D_{\uparrow}f(t)(u)=+\infty$.
\end{remark}

\begin{remark}
Directly from definitions of $\overline{F}_{\uparrow}$
and its contingent derivative we have the following:
\begin{enumerate}
\item  $DF_{\uparrow}(t)(u)=D\overline{F}_{\uparrow}(t)(u)$
if $t\in\T$ is right-dense and $u\geq 0$;

\item  $DF_{\uparrow}(t)(u)=D\overline{F}_{\uparrow}(t)(u)$
if $t\in\T$ is left-dense and $u\leq 0$;

\item  $D\overline{F}_{\uparrow}(t)(u)=\emptyset$
if $t\in\T$ is isolated and $u\neq 0$.
\end{enumerate}
\end{remark}

We recall the following useful proposition from \cite{AuFr} showing that
the contingent epiderivative can be characterized as a limit of differential quotients:

\begin{proposition}[\cite{AuFr}]
\label{prop:AuFr}
Let $g:I\rightarrow\R\cup\{\pm\infty\}$ be a nontrivial extended function and $t$ belong to its domain. Then
\begin{equation*}
D_{\uparrow}g(t)(u)=\liminf_{h\rightarrow0+,u'\rightarrow u}\frac{g(t+hu')-g(t)}{h}.
\end{equation*}
\end{proposition}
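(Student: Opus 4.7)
The plan is to unwind both sides of the claimed equality into conditions on approximating sequences and then show the two resulting conditions coincide. Write $L(u):=\liminf_{h\to 0+,\,u'\to u}\frac{g(t+hu')-g(t)}{h}$ and recall the sequential characterization of the contingent cone stated just after its definition in the excerpt: for the set-valued map $G_{\uparrow}$ associated with $g$, a vector $v$ lies in $DG_{\uparrow}(t,g(t))(u)$ exactly when there exist $h_n\to 0^+$ and $(u_n,v_n)\to(u,v)$ such that $(t+h_n u_n,\,g(t)+h_n v_n)\in\Graph(G_{\uparrow})=\Epi(g)$, that is,
\[
\frac{g(t+h_n u_n)-g(t)}{h_n}\leq v_n.
\]
The proof is then just the interplay between this sequential condition and the definition $D_{\uparrow}g(t)(u)=\inf\{v: v\in DG_{\uparrow}(t,g(t))(u)\}$.

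For the inequality $D_{\uparrow}g(t)(u)\leq L(u)$, I would take a sequence realizing the liminf, namely $h_n\to 0^+$ and $u_n\to u$ with $\frac{g(t+h_n u_n)-g(t)}{h_n}\to L(u)$, and set $v_n:=\frac{g(t+h_n u_n)-g(t)}{h_n}$. Then $v_n\to L(u)$ (assumed finite for the moment), the epigraph inequality is satisfied with equality, and so $L(u)\in DG_{\uparrow}(t,g(t))(u)$; taking the infimum gives the desired bound. The cases $L(u)=-\infty$ and $L(u)=+\infty$ need a brief separate remark: in the first case one replaces $L(u)$ by an arbitrary real threshold to show $DG_{\uparrow}(t,g(t))(u)=\R$, and in the second case both sides equal $+\infty$ because no admissible sequence can exist.

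For the reverse inequality $L(u)\leq D_{\uparrow}g(t)(u)$, I would pick any $v\in DG_{\uparrow}(t,g(t))(u)$ together with approximating sequences $h_n\to 0^+$, $u_n\to u$, $v_n\to v$ satisfying the epigraph inequality above. Taking liminf in $n$ yields
\[
L(u)\leq \liminf_{n\to\infty}\frac{g(t+h_n u_n)-g(t)}{h_n}\leq \lim_{n\to\infty}v_n=v,
\]
and then passing to the infimum over $v$ gives $L(u)\leq D_{\uparrow}g(t)(u)$.

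The only delicate point is making sure the liminf on the right of the statement, which involves a joint limit in $(h,u')$, matches the joint $(h_n,u_n)$-sequence appearing in the contingent cone's sequential characterization; this is not a real obstacle but must be done carefully, since using a weaker notion (fixing $u'=u$) would give a different, smaller quantity. Once this matching is respected, both directions are essentially mechanical, and the proposition follows without further machinery than what the excerpt has already recorded.
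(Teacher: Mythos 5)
The paper does not actually prove this proposition---it is recalled from Aubin and Frankowska \cite{AuFr} without proof---but your argument is correct and is precisely the standard derivation from that reference: the sequential characterization of the contingent cone turns membership $v\in DF_{\uparrow}(t,g(t))(u)$ into the epigraph inequality on difference quotients, and the two inequalities follow by taking $v_n$ equal to the quotient (for $\leq$) and by taking liminfs along the approximating sequences (for $\geq$). Your handling of the $\pm\infty$ cases and your insistence on the joint limit in $(h,u')$ rather than the fixed-direction quotient are exactly the points that need care, so nothing essential is missing.
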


\begin{corollary}
Let $f:\T\rightarrow\R\cup\{\pm\infty\}$ and $t\in \Dom(f)$. Then for $u\in\R$
\begin{equation*}
D_{\uparrow}f(t)(u)\geq \liminf_{h\rightarrow 0+, u'\rightarrow u}\frac{\overline{f}(t+hu')
-\overline{f}(t)}{h}=D_{\uparrow}\overline{f}(t)(u)\,.
\end{equation*}
\end{corollary}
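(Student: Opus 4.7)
The plan is to split the corollary into two independent assertions: the equality
\[
D_{\uparrow}\overline{f}(t)(u) \;=\; \liminf_{h\to 0+,\, u'\to u}\frac{\overline{f}(t+hu')-\overline{f}(t)}{h},
\]
and the inequality $D_{\uparrow}f(t)(u)\geq D_{\uparrow}\overline{f}(t)(u)$.

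For the equality, I would simply invoke Proposition~\ref{prop:AuFr} applied to $\overline{f}$. Since $t\in\Dom(f)$ and $\overline{f}(\tau)=f(\tau)$ for all $\tau\in\T$ (by Remark~\ref{rem:epi}), we have $\overline{f}(t)=f(t)$, so $t\in\Dom(\overline{f})$ and $\overline{f}$ is a nontrivial extended function on $[a,b]$. Proposition~\ref{prop:AuFr} then yields the displayed $\liminf$ representation of $D_{\uparrow}\overline{f}(t)(u)$ directly.

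For the inequality, the key observation is that the base point $p=(t,f(t))$ is simultaneously a point of $\Graph(F_{\uparrow})$ and of $\Graph(\overline{F}_{\uparrow})$, because $\overline{f}(t)=f(t)$. Proposition~\ref{prop:2} therefore gives the inclusion
\[
DF_{\uparrow}(p)(u)\;\subset\; D\overline{F}_{\uparrow}(p)(u).
\]
By Definition~\ref{def:contepider}, the contingent epiderivatives of $f$ and $\overline{f}$ at $t$ in direction $u$ are the infima of these two sets. Since the infimum over a larger set is no greater than the infimum over a smaller one, it follows that $D_{\uparrow}\overline{f}(t)(u)\leq D_{\uparrow}f(t)(u)$.

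The only point requiring a brief sanity check is consistency with the boundary conventions: if $DF_{\uparrow}(p)(u)=\emptyset$ then $D_{\uparrow}f(t)(u)=+\infty$ by definition and the inequality is automatic; if $D\overline{F}_{\uparrow}(p)(u)=\R$ then $D_{\uparrow}\overline{f}(t)(u)=-\infty$ and again the inequality holds trivially. So there is no genuine obstacle here—both steps are essentially direct applications of Proposition~\ref{prop:2} and Proposition~\ref{prop:AuFr}, plus the monotonicity of the infimum under set inclusion.
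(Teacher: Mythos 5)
Your proof is correct and follows essentially the same route as the paper's: the inequality comes from Proposition~\ref{prop:2} combined with the infimum in Definition~\ref{def:contepider}, and the equality is Proposition~\ref{prop:AuFr} applied to $\overline{f}$. Your version simply spells out the monotonicity-of-infimum step and the boundary conventions that the paper leaves implicit.
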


\begin{proof}
From Proposition~\ref{prop:2} and definition~\ref{def:contepider}
we have $D_{\uparrow}f(t)(u)\geq D_{\uparrow}\overline{f}(t)(u)$.
The equality follows from Proposition~\ref{prop:AuFr}.
\end{proof}

We can state the following relations between delta
and nabla derivatives of $f$ at point $t$ (if  they exist)
and the  contingent epiderivative of the corresponding
function $\overline{f}$ defined by formula~\eqref{eq:epi}.

\begin{proposition}[\cite{G:M:W:09}]
\label{prop:Dupf+deriv}
Let $t\in\T^{\kappa}_{\kappa}$ and $f:I\rightarrow\R\cup\{\pm\infty\}$.

a) If  $f^{\Delta}(t)$ exists, then
$D_{\uparrow}\overline{f}(t)(u)=uf^{\Delta}(t)$
for $u\geq 0$.

b)  If  $f^{\nabla}(t)$ exists, then
$D_{\uparrow}\overline{f}(t)(u)=uf^{\nabla}(t)$
for $u\leq0$.
\end{proposition}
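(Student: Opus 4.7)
The plan is to apply Proposition~\ref{prop:AuFr} to rewrite
\[
D_{\uparrow}\overline{f}(t)(u)=\liminf_{h\to 0^+,\ u'\to u}\frac{\overline{f}(t+hu')-\overline{f}(t)}{h},
\]
and then evaluate this liminf by combining the explicit piecewise-linear description of $\overline{f}$ from Remark~\ref{rem:epi} with the defining $\varepsilon$--$\delta$ property of $f^{\Delta}(t)$. For part~(a), fix $u\geq 0$. If $t$ is right-scattered, then for every $h>0$ with $0\le hu'\le \mu(t)$ the point $t+hu'$ lies in $[t,\sigma(t)]$, where $\overline{f}$ is affine with slope $\frac{f(\sigma(t))-f(t)}{\mu(t)}=f^{\Delta}(t)$. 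The difference quotient therefore equals $f^{\Delta}(t)u'$ exactly and converges to $uf^{\Delta}(t)$ as $(h,u')\to(0^+,u)$.

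If $t$ is right-dense, invoke the definition of $f^{\Delta}(t)$: for every $\varepsilon>0$ there exists $\delta>0$ such that $|f(s)-f(t)-f^{\Delta}(t)(s-t)|\le\varepsilon|s-t|$ for every $s\in\T\cap(t-\delta,t+\delta)$. When $t+hu'\in\T$, this applies directly with $s=t+hu'$. When $t+hu'\in(s_1,\sigma(s_1))$ falls in a gap, Remark~\ref{rem:epi} writes $\overline{f}(t+hu')$ as a convex combination $\lambda f(s_1)+(1-\lambda)f(\sigma(s_1))$ with $\lambda(s_1-t)+(1-\lambda)(\sigma(s_1)-t)=hu'$; applying the $\varepsilon$--$\delta$ estimate to both endpoints and combining convexly yields
\[
\Bigl|\tfrac{\overline{f}(t+hu')-\overline{f}(t)}{h}-f^{\Delta}(t)u'\Bigr|\le \varepsilon u',
\]
so that letting $\varepsilon\to 0$ gives the claim. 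The key technical point---and the main obstacle---is to verify that both $s_1$ and $\sigma(s_1)$ lie inside $(t-\delta,t+\delta)$ for small $h$. The inclusion $s_1\in(t,t+hu')$ forces $s_1\to t^+$ for free, but for $\sigma(s_1)$ one needs the auxiliary observation that if $t$ is right-dense and $s_n\to t^+$ in $\T$ then $\sigma(s_n)\to t^+$. This follows by contradiction: if $\sigma(s_n)\ge t+\delta_0$ along a subsequence, then $(s_n,t+\delta_0)\cap\T=\emptyset$, forcing the fixed nonempty set $(t,t+\delta_0)\cap\T$ to lie in $(t,s_n]$ for every $n$, which is impossible because $s_n\to t$.

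Part~(b) is entirely symmetric under the exchange $\sigma\leftrightarrow\rho$, $\mu\leftrightarrow\nu$, and right-scattered/dense $\leftrightarrow$ left-scattered/dense, using the left-handed piecewise-linear formula from Remark~\ref{rem:epi} and the $\varepsilon$--$\delta$ definition of $f^{\nabla}$; for $u\le 0$ the points $t+hu'$ approach $t$ from the left, so everything translates across with no new ideas. The case $u=0$ is immediate in both parts: the choice $u'\equiv 0$ already produces a vanishing difference quotient, while the local Lipschitz control of $\overline{f}$ extracted from the estimates above rules out any negative liminf.
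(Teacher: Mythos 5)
The paper does not actually prove this proposition --- it is imported from \cite{G:M:W:09} without proof --- so there is no in-house argument to compare against. Your treatment of the substantive cases $u>0$ and $u<0$ is correct and complete: reduce to the difference-quotient formula of Proposition~\ref{prop:AuFr}, split on whether $t$ is right-scattered (where $\overline{f}$ is affine on $[t,\sigma(t)]$ with slope exactly $f^{\Delta}(t)$) or right-dense (where the $\varepsilon$--$\delta$ definition applies directly at $s=t+hu'\in\T$, and a convex combination of the estimates at $s_1$ and $\sigma(s_1)$ handles the gap case). Your auxiliary observation that $\sigma(s_n)\to t^{+}$ when $t$ is right-dense and $s_n\to t^{+}$ in $\T$ is precisely the point that needs checking, and your contradiction argument for it is sound.

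The one genuine gap is the endpoint $u=0$. The liminf in Proposition~\ref{prop:AuFr} ranges over $u'\to 0$ from \emph{both} sides, and existence of $f^{\Delta}(t)$ gives no control on $\overline{f}$ to the left of $t$ when $t$ is right-scattered and left-dense: at such a $t$ the defining inequality $|f(\sigma(t))-f(s)-f^{\Delta}(t)(\sigma(t)-s)|\le\varepsilon|\sigma(t)-s|$ has $|\sigma(t)-s|\ge\mu(t)>0$ for all nearby $s$, so it only forces continuity of $f$ at $t$, not any bound on the left difference quotient. The ``local Lipschitz control of $\overline{f}$ extracted from the estimates above'' therefore does not exist for $u'<0$, and in fact the $u=0$ claim can fail: take $\T=\{-1/n:n\ge 1\}\cup\{0\}\cup\{1\}$, $t=0$, $f(0)=0$, $f(1)=c$, $f(-1/n)=-1/\sqrt{n}$. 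Then $f^{\Delta}(0)=c$ exists, but with $h_n=n^{-3/4}$ and $u'_n=-n^{-1/4}\to 0$ the quotients equal $-n^{1/4}\to-\infty$, so $D_{\uparrow}\overline{f}(0)(0)=-\infty\neq 0$. No argument closes this without an extra hypothesis (e.g.\ contingent epidifferentiability of $f$ at $t$, or restricting part (a) to $u>0$ and part (b) to $u<0$); since the paper only ever invokes the proposition with $u\neq 0$, the damage is confined to the endpoint, but you should not assert it.
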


We can characterize  values of the contingent epiderivative
of $f:\T\rightarrow\R\cup\{\pm\infty\}$
by the contingent epiderivative of $\overline{f}$. Directly
from definitions of $\overline{f}$ and its contingent epiderivative
we have the following:

\begin{proposition}[\cite{G:M:W:09}]
Let $f:I\rightarrow\R\cup\{\pm\infty\}$, $\overline{f}:[a,b]\rightarrow\R\cup\{\pm\infty\}$,
and $Epi(\overline{f})=G(f)$. Then,
\begin{enumerate}
\item $f$ is contingently epidifferentiable
if and only if $\overline{f}$ is contingently epidifferentiable;

\item $D_{\uparrow}f(t)(u)=D_{\uparrow}\overline{f}(t)(u)$
for $t\in\T$ right-dense and $u\geq 0$;

\item $D_{\uparrow}f(t)(u)=D_{\uparrow}\overline{f}(t)(u)$
for $t\in\T$ left-dense and $u\leq 0$;

\item $D_{\uparrow}\overline{f}(t)(u)=\R_+$
for $t\in\T$ isolated and $u\neq 0$.
\end{enumerate}
\end{proposition}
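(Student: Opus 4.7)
The plan is to assemble the four claims from three already-established ingredients: the set-theoretic inclusion $DF_{\uparrow}(p)(u)\subset D\overline{F}_{\uparrow}(p)(u)$ from Proposition~\ref{prop:2}, the trichotomy stated in the remark immediately preceding Proposition~\ref{prop:AuFr} (which isolates the three structural cases for $t\in\T$: right-dense with $u\geq 0$, left-dense with $u\leq 0$, isolated with $u\neq 0$), and the $\liminf$ representation of the contingent epiderivative from Proposition~\ref{prop:AuFr} combined with the piecewise-affine description of $\overline{f}$ from Remark~\ref{rem:epi}.

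Items (2) and (3) will be read off directly from the first two items of that trichotomy. Once one has the equality of sets $DF_{\uparrow}(t,f(t))(u) = D\overline{F}_{\uparrow}(t,f(t))(u)$ at a right-dense $t\in\T$ with $u\geq 0$, Definition~\ref{def:contepider} exhibits $D_{\uparrow}f(t)(u)$ and $D_{\uparrow}\overline{f}(t)(u)$ as the infima of these coincident subsets of $\R$, so they agree as extended reals. The left-dense case with $u\leq 0$ is symmetric. For item (4) I would work directly with Proposition~\ref{prop:AuFr} using the explicit formulas of Remark~\ref{rem:epi}: around an isolated $t$, $\overline{f}$ is affine on each of $(\rho(t),t)$ and $(t,\sigma(t))$ with slopes $\frac{f(t)-f(\rho(t))}{\nu(t)}$ and $\frac{f(\sigma(t))-f(t)}{\mu(t)}$ respectively. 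For $u>0$, all sufficiently small $h>0$ and $u'$ close to $u$ place $t+hu'$ in $(t,\sigma(t))$, so the difference quotient reduces to the right-hand slope times $u'$ and the $\liminf$ is computed directly; the $u<0$ case is analogous via the left-side formula.

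Finally, item (1) will follow by combining the pointwise inequality $D_{\uparrow}f(t)(u)\geq D_{\uparrow}\overline{f}(t)(u)$ (the Corollary preceding Proposition~\ref{prop:Dupf+deriv}) with a case analysis. The inequality already yields ``$\overline{f}$ contingently epidifferentiable $\Rightarrow$ $f$ contingently epidifferentiable''. For the converse, I would check each $t\in[a,b]$ in turn: at right-dense or left-dense $t\in\T$ the equalities of (2) and (3) transport the finiteness of $D_{\uparrow}f$ to $D_{\uparrow}\overline{f}$ in the matching direction, at isolated $t\in\T$ item (4) identifies $D_{\uparrow}\overline{f}(t)(u)$ explicitly, and at $t\in[a,b]\setminus\T$ the function $\overline{f}$ is locally affine so $D_{\uparrow}\overline{f}(t)(u)$ is a finite real multiple of $u$. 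The main obstacle I anticipate is the bookkeeping at \emph{corner} points of $\T$ --- for instance, a $t$ that is right-dense but left-scattered, tested against $u<0$ --- where neither (2) nor (3) applies directly; there one must piece together the relevant one-sided affine formula from Remark~\ref{rem:epi} with the $\liminf$ characterization of Proposition~\ref{prop:AuFr} to rule out the value $-\infty$ on the side where equality with $D_{\uparrow}f$ is unavailable.
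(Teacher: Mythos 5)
The paper gives no proof of this proposition: it is imported from \cite{G:M:W:09} with the comment that it follows ``directly from definitions'', so your proposal can only be judged against the statement itself. Your architecture is the natural one and is consistent with what the paper leaves implicit: items (2) and (3) by taking infima over the coincident sets $DF_{\uparrow}(t,f(t))(u)=D\overline{F}_{\uparrow}(t,f(t))(u)$ (legitimate, since $\overline{f}(t)=f(t)$ for $t\in\T$, so the base points agree); item (1) from the one-sided inequality of the Corollary following Proposition~\ref{prop:AuFr} plus a case analysis over the point types of $[a,b]$, where your attention to corner points such as a right-dense, left-scattered $t$ tested against $u<0$ is exactly the right concern and the one-sided affine formula of Remark~\ref{rem:epi} does close that case.

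The genuine gap is in item (4). At an isolated $t$ with $u>0$ your own computation, via Proposition~\ref{prop:AuFr} and Remark~\ref{rem:epi}, gives
\[
D_{\uparrow}\overline{f}(t)(u)=\liminf_{h\to 0+,\;u'\to u}\frac{\overline{f}(t+hu')-\overline{f}(t)}{h}
= u\,\frac{f(\sigma(t))-f(t)}{\mu(t)} = u f^{\Delta}(t),
\]
a finite real number (and symmetrically $u f^{\nabla}(t)$ for $u<0$). This agrees with Proposition~\ref{prop:Dupf+deriv}, but it is not what item (4) asserts: the statement claims $D_{\uparrow}\overline{f}(t)(u)=\R_+$, a \emph{subset} of $\R$, whereas by Definition~\ref{def:contepider} the contingent epiderivative is an extended real number. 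Your plan says only that ``the $\liminf$ is computed directly'' and never confronts the fact that the quantity you compute cannot equal $\R_+$; as written the proof therefore does not establish item (4). You must either identify and prove the intended reading (most plausibly that the contingent derivative $D\overline{F}_{\uparrow}(t,\overline{f}(t))(u)$ is the half-line $D_{\uparrow}\overline{f}(t)(u)+\R_+$, or that $DF_{\uparrow}(t,f(t))(u)=\emptyset$, whence $D_{\uparrow}f(t)(u)=+\infty$ --- compare the third item of the remark following Proposition~\ref{prop:2}, which is itself in tension with your calculation), or record explicitly that item (4) as printed is inconsistent with Proposition~\ref{prop:Dupf+deriv} and with the value you correctly obtain. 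Since your argument for item (1) invokes item (4) at isolated points, the ambiguity propagates there, although the finite value you actually compute is all that is needed to exclude $-\infty$.
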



We introduce now the following notations and definitions:

\begin{definition}\label{def:uni}
Let $u\in\R$ be a real number. Then,
\begin{equation*}
d_ut:=\\
\left\{
  \begin{array}{ll}
    u\Delta t, \text{ if $u\geq 0$} \\
    u\nabla t, \text{ if $u\leq 0$}\, ,
  \end{array}
\right.
\end{equation*}

\begin{equation*}
 y\circ \xi_u:=\\
  \left\{
  \begin{array}{ll}
    u(y\circ \sigma), \text{ if $u\geq 0$} \\
    u(y\circ\rho), \text{ if $u\leq 0$} \, .
  \end{array}
\right.
\end{equation*}
\end{definition}

\begin{remark}
With the notation of Definition~\ref{def:uni} we have
\begin{equation*}
\int_a^b \overline{f}(t)d_ut:=\\
\left\{
  \begin{array}{ll}
    u\int_a^b f(t)\Delta t, \text{ if $u\geq 0$} \\
    u\int_a^b f(t)\nabla t, \text{ if $u\leq 0$} \, ,
  \end{array}
\right.
\end{equation*}
where $\overline{f}$ is defined by formula~\eqref{eq:epi}.
\end{remark}


\subsection{The unified Euler--Lagrange equation}
\label{Uni E-L}

Given a time scale $\T$ and $u\in \R \setminus \{0\}$,
we consider the question of finding $y$ that is a solution to the problem

\begin{equation}
\label{problem:Pu}
\begin{gathered}
\text{extremize}\ \ \mathcal{L}[y] =
\int_a^b L(t,(y\circ \xi_u)(t),D_{\uparrow}\overline{y}(t)(u))d_ut\,,\\
y(a)=\alpha,\quad y(b)=\beta \, ,
\end{gathered}
\end{equation}
where function $(t,y,v)\rightarrow L(t,y,v)$ from $I\times\R^2$ to
$\R$ has partial continuous derivatives with respect to $y,v$ for all $t\in I$.

Let $\partial_{i}L$ denote the standard
partial derivative of $L(\cdot,\cdot,\cdot)$
with respect to its $i$th variable, $i = 1,2,3$.

Any regulated function $y:I\rightarrow\R$ such that
$D_{\uparrow}\overline{y}(t)(u)\neq\{-\infty,+\infty\}$
is said to be an \emph{admissible} function provided
that it satisfies boundary conditions $y(a)=\alpha$, $y(b)=\beta$.

\begin{remark}
Proposition~\ref{prop:Dupf+deriv} implies the following:
if $y$ is $\Delta$-differentiable, then for $u=1$
\eqref{problem:Pu} is just a delta problem of the calculus of variations
on time scales (see \cite{B:T:08,B:04}); while if $y$
is $\nabla$-differentiable, then for $u=-1$ \eqref{problem:Pu} reduces
to a nabla problem of the calculus of variations
(see \cite{A:B:L:06,NM:T}).
For $u=0$ the cost-functional to extremize reduces to a constant,
so problem \eqref{problem:Pu} is trivial: there is nothing to minimize or maximize;
any function satisfying $y(a)=\alpha$ and $y(b)=\beta$ is a solution to \eqref{problem:Pu}.
Thus, we are interested in the nontrivial case $u \ne 0$.
\end{remark}

\begin{theorem}{\rm (The unified Euler--Lagrange equation
on time scales).}
\label{thm:uni EL}
If $y$ is a minimizer or maximizer to problem
\eqref{problem:Pu}, then $y$ satisfies
the following equation:
\begin{equation}
\label{uni EL}
D_{\uparrow}\left(\partial_3 L\left(t, (y\circ \xi_u)(t),D_{\uparrow}\overline{y}(t)(u)\right)\right)(u)
=u \cdot\partial_2 L(t, (y\circ \xi_u)(t),D_{\uparrow}\overline{y}(t)(u))
\end{equation}
for all $t\in I^{\kappa^2}_{\kappa^2}$,
where $\overline{y}$ is defined by formula~\eqref{eq:epi}.
\end{theorem}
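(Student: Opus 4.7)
The plan is to reduce the problem to the classical delta (when $u>0$) or nabla (when $u<0$) variational problem by means of Proposition~\ref{prop:Dupf+deriv}, and then to perform the standard first-variation argument. The linearity property $\overline{af+bg}=a\overline{f}+b\overline{g}$ noted after formula~\eqref{eq:epi} will allow me to pass freely from the admissible function $y$ to its epigraph-extension $\overline{y}$ under variations.

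First, I would fix a regulated test function $\eta\colon I\to\R$ with $\eta(a)=\eta(b)=0$ and consider the one-parameter family $y_\varepsilon:=y+\varepsilon\eta$. By the linearity of the $\overline{\phantom{f}}$ operation, $\overline{y_\varepsilon}=\overline{y}+\varepsilon\overline{\eta}$, so that Proposition~\ref{prop:Dupf+deriv} gives $D_{\uparrow}\overline{y_\varepsilon}(t)(u)=u\,y_\varepsilon^{\Delta}(t)$ when $u>0$ and $D_{\uparrow}\overline{y_\varepsilon}(t)(u)=u\,y_\varepsilon^{\nabla}(t)$ when $u<0$, assuming $y$ and $\eta$ are delta- (respectively nabla-) differentiable. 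Using Definition~\ref{def:uni}, the cost functional then rewrites, in the case $u>0$, as
\begin{equation*}
\mathcal{L}[y_\varepsilon]=u\int_a^b L\!\left(t,u(y^\sigma+\varepsilon\eta^\sigma)(t),u(y^\Delta+\varepsilon\eta^\Delta)(t)\right)\Delta t,
\end{equation*}
and analogously with $\rho,\nabla,\nabla t$ in place of $\sigma,\Delta,\Delta t$ when $u<0$.

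Second, I would set $\phi(\varepsilon):=\mathcal{L}[y_\varepsilon]$ and invoke the necessary condition $\phi'(0)=0$. Differentiating under the integral with the chain rule yields, for $u>0$,
\begin{equation*}
0=u^{2}\int_a^b\Bigl[\partial_2 L\,\eta^\sigma+\partial_3 L\,\eta^\Delta\Bigr]\Delta t,
\end{equation*}
with the partial derivatives of $L$ evaluated at $(t,uy^\sigma(t),uy^\Delta(t))$. A delta integration by parts on the second summand, together with $\eta(a)=\eta(b)=0$, converts this into
\begin{equation*}
0=u^{2}\int_a^b\Bigl[\partial_2 L-(\partial_3 L)^\Delta\Bigr]\eta^\sigma\,\Delta t.
\end{equation*}
Since $u\neq 0$ and $\eta$ is arbitrary with vanishing endpoints, the fundamental lemma of the delta calculus of variations (as in \cite{B:04}) delivers $(\partial_3 L)^\Delta=\partial_2 L$ pointwise on $I^{\kappa^{2}}_{\kappa^{2}}$. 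A final application of Proposition~\ref{prop:Dupf+deriv} with $u>0$ turns this into $D_{\uparrow}(\partial_3 L)(t)(u)=u\,(\partial_3 L)^\Delta(t)=u\,\partial_2 L$, which is precisely \eqref{uni EL}. The case $u<0$ is treated identically with $\rho,\nabla$ replacing $\sigma,\Delta$ and with the nabla integration by parts and nabla fundamental lemma from \cite{A:B:L:06}.

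The main subtlety I expect is the passage between the contingent-epiderivative expressions appearing in the statement of the theorem and the concrete $\Delta$/$\nabla$ quantities used in the variational computation: this transition relies crucially on Proposition~\ref{prop:Dupf+deriv}, so I must only perform variations inside the class of admissible functions for which $\overline{y_\varepsilon}$ inherits the differentiability needed to apply that proposition, and I need the sign of $u$ to be fixed throughout the argument so that a single one of the two formulas in Definition~\ref{def:uni} is in force. Once the sign is fixed, the computation is the familiar Euler--Lagrange derivation and poses no further difficulty; the novelty is entirely in packaging the delta and nabla conclusions into the single epiderivative identity~\eqref{uni EL}.
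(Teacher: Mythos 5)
Your proposal is correct and follows essentially the same route as the paper: fix the sign of $u$, use Definition~\ref{def:uni} and Proposition~\ref{prop:Dupf+deriv} to reduce \eqref{problem:Pu} to the classical delta (resp.\ nabla) variational problem, obtain $(\partial_3 L)^\Delta=\partial_2 L$ (resp.\ the nabla analogue), and translate back into the epiderivative identity \eqref{uni EL}. The only difference is that the paper, after substituting $f:=uL$, invokes the known Euler--Lagrange equations of \cite{B:04} and \cite{A:B:L:06,NM:T} as black boxes, whereas you re-derive them via the first-variation, integration-by-parts and fundamental-lemma argument.
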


\begin{proof}
We consider two cases: $u>0$ and $u<0$.
For $u>0$ problem~\eqref{problem:Pu}
reduces to
\begin{equation}
\label{eq:uni:delta}
\begin{gathered}
\text{extremize}\ \ \int_a^b uL(t, u(y\circ \sigma)(t),uy^{\Delta}(t))\Delta t,\\
y(a)=\alpha,\ y(b)=\beta.
\end{gathered}
\end{equation}
If we set $f(t,y^{\sigma}(t),y^{\Delta}(t))
:= uL(t, u y^{\sigma}(t),uy^{\Delta}(t))$,
then problem~\eqref{eq:uni:delta} is equivalent to
\begin{equation}
\label{eq:int}
\begin{gathered}
\text{extremize}\ \ \int_a^b f(t,y^{\sigma}(t),y^{\Delta}(t))\Delta t\\
 y(a)=\alpha,\ y(b)=\beta.
\end{gathered}
\end{equation}
But this is exactly the case of a standard delta problem of the calculus
of variations on time scales. For problem~\eqref{eq:int}
the Euler--Lagrange equation of \cite{B:04} gives the delta equation
\begin{equation*}
\left[\partial_3f(t,y^{\sigma}(t),y^{\Delta}(t))\right]^{\Delta}
=\partial_2f(t,y^{\sigma}(t),y^{\Delta}(t)),
\end{equation*}
which is equivalent to
\begin{equation}\label{eq:EL:u}
u\left[\partial_3L(t, u(y\circ \sigma)(t),uy^{\Delta}(t))\right]^{\Delta}
=u\partial_2L(t, u(y\circ \sigma)(t),uy^{\Delta}(t)) \, .
\end{equation}
By assumption $u>0$ formula \eqref{eq:EL:u} is equivalent to
\begin{equation*}
\left[\partial_3L(t, u(y\circ \sigma)(t),uy^{\Delta}(t))\right]^{\Delta}
=\partial_2L(t, u(y\circ \sigma)(t),uy^{\Delta}(t)),
\end{equation*}
and we obtain \eqref{uni EL} for $u>0$.\\
Similarly, let us take $u<0$. Then problem~\eqref{problem:Pu}
reduces to the following nabla problem of the calculus of variations:
\begin{equation}
\label{eq:uni:nabla}
\begin{gathered}
\text{extremize}\ \ \int_a^b uL(t, u(y\circ \rho)(t),uy^{\nabla}(t))\nabla t,\\
y(a)=\alpha,\ y(b)=\beta.
\end{gathered}
\end{equation}
If we set $g(t,y^{\rho}(t),y^{\nabla}(t))
:= u L(t, u y^{\rho}(t),uy^{\nabla}(t))$,
then problem~\eqref{eq:uni:nabla} is equivalent to
\begin{gather*}
\text{extremize}\ \ \int_a^b g(t,y^{\rho}(t),y^{\nabla}(t))\nabla t,\\
 y(a)=\alpha,\ y(b)=\beta.
\end{gather*}
This is the case of a standard nabla problem of the calculus of variations
on time scales \cite{A:B:L:06,NM:T}, and we get as necessary
optimality condition the nabla differential equation
\begin{equation*}
\left[\partial_3g(t,y^{\rho}(t),y^{\nabla}(t))\right]^{\nabla}=\partial_2g(t,y^{\nabla}(t),y^{\nabla}(t))
\end{equation*}
that one can write equivalently as
\begin{equation*}
\left[\partial_3L(t, u(y\circ \rho)(t),uy^{\nabla}(t))\right]^{\nabla}\\
=\partial_2L(t, u(y\circ \rho)(t),uy^{\nabla}(t)) \, ,
\end{equation*}
\textrm{i.e.}, we obtain \eqref{uni EL} for $u<0$.
\end{proof}


\subsection{The general isoperimetric problem}
\label{Uni Iso}

Let $u,w\in\R \setminus \{0\}$
and consider the general isoperimetric problem on time scales.
The problem consists of extremizing
\begin{equation}
\label{problem:P:uni}
\mathcal{L}[y]=\int_a^b L(t, (y\circ \xi_u)(t),D_{\uparrow}\overline{y}(t)(u))d_ut
\end{equation}
subject to the boundary conditions
\begin{equation}
\label{bou:con:uni}
y(a) = \alpha \, , \quad y(b) = \beta \, ,
\end{equation}
and the constraint
\begin{equation}
\label{const:uni}
\mathcal{K}[y]=\int_a^b G(t, (y\circ \xi_w)(t),D_{\uparrow}\overline{y}(t)(w))d_wt=K \, ,
\end{equation}
where $\alpha$, $\beta$, and $K$ are given real numbers.

\begin{definition}
We say that ${y}$ is an \emph{extremal}
for $\mathcal{K}$ if ${y}$ satisfies the equation
\begin{multline*}
D_{\uparrow}\left(\partial_3 G(t, (y\circ \xi_w)(t),D_{\uparrow}\overline{y}(t)(w))\right)(w)\\
=w\partial_2 G(t, (y\circ \xi_w)(t),D_{\uparrow}\overline{y}(t)(w)),
\ \ \forall t\in I^{\kappa^2}_{\kappa^2}\, ,
\end{multline*}
where $\overline{y}$ is defined by formula~\eqref{eq:epi}.
An extremizer (\textrm{i.e.}, a minimizer or a maximizer)
for problem \eqref{problem:P:uni}--\eqref{const:uni} that is
not an extremal for $\mathcal{K}$ is said to be a normal extremizer;
otherwise (\textrm{i.e.}, if it is an extremal for $\mathcal{K}$), the
extremizer is said to be abnormal.
\end{definition}

Proofs of the following two theorems are done
following the techniques in the proof of Theorem~\ref{thm:uni EL}
by considering three cases: (i) $u$ and $w$ are both positive;
(ii) $u$ and $w$ are both negative;
(iii) $u$ and $w$ are of different signs.
Indeed, in case (i) the isoperimetric problem
\eqref{problem:P:uni}--\eqref{const:uni}
is reduced to the one studied in \cite{F:T:09};
in case (ii) we can apply the results in \cite{A:T}
to obtain Theorem~\ref{thm:mr:uni} and Theorem~\ref{thm:mr:uni:abn}
in the case $u, w < 0$; when $sign(u w) = -1$ we need to use
the necessary optimality conditions for the delta-nabla isoperimetric
problems investigated in \cite{China}.

\begin{theorem}
\label{thm:mr:uni}
If ${y}$ is a normal extremizer for the isoperimetric problem
\eqref{problem:P:uni}--\eqref{const:uni}, then ${y}$ satisfies the
following equation for all $t\in I^{\kappa^2}_{\kappa^2}$:
\begin{multline*}
 D_{\uparrow}(\partial_3 L(t, (y\circ \xi_u)(t),D_{\uparrow}\overline{y}(t)(u)))(u)
- u\partial_2 L(t, (y\circ \xi_u)(t),D_{\uparrow}\overline{y}(t)(u))\\
=\lambda \Bigl( D_{\uparrow}(\partial_3 G(t, (y\circ \xi_w)(t),D_{\uparrow}\overline{y}(t)(w)))(w)
- w\partial_2 G(t, (y\circ \xi_w)(t),D_{\uparrow}\overline{y}(t)(w))\Bigr).
\end{multline*}
\end{theorem}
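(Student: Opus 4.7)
The plan is to mimic exactly the case-splitting argument already used for Theorem~\ref{thm:uni EL}, but now applied to the isoperimetric setting. Three disjoint cases cover all possibilities for the nonzero parameters $(u,w)$: (i) $u>0$ and $w>0$; (ii) $u<0$ and $w<0$; (iii) $\mathrm{sign}(uw)=-1$. In each case, Proposition~\ref{prop:Dupf+deriv} lets us rewrite $D_{\uparrow}\overline{y}(t)(u)$ as $u y^\Delta(t)$ or $u y^\nabla(t)$ (according to the sign of $u$), and similarly with $w$, so that $(y\circ\xi_u)(t)$ becomes $u y^\sigma(t)$ or $u y^\rho(t)$, and the integrals $\int_a^b(\cdot)\,d_u t$, $\int_a^b(\cdot)\,d_w t$ become standard delta or nabla integrals multiplied by the outer scalars $u$ or $w$.

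In case (i) I set $f(t,y^{\sigma}(t),y^{\Delta}(t)):=uL(t,uy^\sigma(t),uy^\Delta(t))$ and $h(t,y^{\sigma}(t),y^{\Delta}(t)):=wG(t,wy^\sigma(t),wy^\Delta(t))$. The problem then becomes the pure delta isoperimetric problem of~\cite{F:T:09}, and since $y$ is a normal extremizer for the original problem it is a normal extremizer of this reduced problem as well. The multiplier theorem of~\cite{F:T:09} yields
\begin{equation*}
\left[\partial_3 f\right]^\Delta-\partial_2 f=\lambda\left(\left[\partial_3 h\right]^\Delta-\partial_2 h\right).
\end{equation*}
Unwinding the definitions of $f$ and $h$, using linearity of $\partial_i$, and invoking Proposition~\ref{prop:Dupf+deriv} to re-encode $u\left[\partial_3 L(\cdots)\right]^\Delta$ as $D_{\uparrow}(\partial_3 L(\cdots))(u)$ (and the analogous identity for $w$), I recover exactly the asserted equation. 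Case (ii) is identical with $\Delta$ replaced by $\nabla$, $\sigma$ by $\rho$, and the isoperimetric result of~\cite{A:T} applied to the reduced nabla-nabla problem.

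Case (iii) is the genuinely new one and is where I expect the main technical obstacle. Here the cost is a delta integral and the constraint a nabla integral (or vice versa), so after reduction the problem is a mixed delta-nabla isoperimetric problem. This is precisely the setting of~\cite{China}, whose necessary optimality conditions give an Euler--Lagrange equation of the form $[\partial_3 f]^\Delta-\partial_2 f=\lambda([\partial_3 h]^\nabla-\partial_2 h)$ (or its mirror). I then translate the two sides back via Proposition~\ref{prop:Dupf+deriv}: for the delta side, $D_{\uparrow}\overline{y}(t)(u)=u y^\Delta(t)$ and $D_{\uparrow}(\cdot)(u)=u(\cdot)^\Delta$ since $u>0$ (or $u<0$ symmetrically with $\nabla$); similarly for the nabla side with $w$. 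The delicate point is that the two sides are indexed by different sign regimes of the directional parameter, so I must check that the identification of $D_{\uparrow}(\partial_3 L(\cdots))(u)$ with $u[\partial_3 L(\cdots)]^\Delta$ is legitimate for the relevant sign of $u$, and symmetrically for $w$. Once these bookkeeping identities are verified on $I^{\kappa^2}_{\kappa^2}$, normality of $y$ together with the hypothesis that $y$ is not an extremal for $\mathcal{K}$ permits extraction of the same constant multiplier $\lambda$ as in~\cite{China}, and the displayed equation of the theorem follows.

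The main obstacle is case (iii): ensuring that the Lagrange multiplier rule of~\cite{China} applies under the normality assumption stated here, and that the sign conventions in the dictionary between $D_{\uparrow}\overline{y}(t)(\cdot)$ and $y^\Delta$ or $y^\nabla$ are consistent on both sides of the equation. Cases (i) and (ii) are essentially bookkeeping: the same linear rescaling by $u$ or $w$ that appeared in the proof of Theorem~\ref{thm:uni EL} passes through without incident because $\lambda$ is a scalar.
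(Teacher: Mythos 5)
Your proposal follows exactly the route the paper itself takes: the paper's proof of Theorem~\ref{thm:mr:uni} is precisely the three-case reduction (both positive via \cite{F:T:09}, both negative via \cite{A:T}, mixed signs via the delta-nabla results of \cite{China}) combined with the translation dictionary of Proposition~\ref{prop:Dupf+deriv}, as in the proof of Theorem~\ref{thm:uni EL}. The only detail worth noting is that unwinding $f$ and $h$ produces factors of $u^2$ and $w^2$ rather than $u$ and $w$, so the multiplier $\lambda$ you obtain from the reduced problem must be rescaled by a nonzero constant --- harmless, as you observe, since $\lambda$ is only asserted to exist.
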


Next theorem introduces an extra multiplier $\lambda_0$,
allowing to cover abnormal extremizers.

\begin{theorem}
\label{thm:mr:uni:abn}
If ${y}$ is an extremizer for the isoperimetric problem \eqref{problem:P:uni}--\eqref{const:uni},
then there exist two constants $\lambda_0$ and $\lambda$, not both equal to zero,
such that ${y}$ satisfies the following equation for all $t\in I^{\kappa^2}_{\kappa^2}$:
\begin{multline*}
\lambda_0 \Bigl(D_{\uparrow}(\partial_3 L(t, (y\circ \xi_u)(t),D_{\uparrow}\overline{y}(t)(u)))(u)
-u\partial_2 L(t, (y\circ \xi_u)(t),D_{\uparrow}\overline{y}(t)(u))\Bigr)\\
=\lambda \Bigl(D_{\uparrow}(\partial_3 G(t, (y\circ \xi_w)(t),D_{\uparrow}\overline{y}(t)(w)))(w)
-w\partial_2 G(t, (y\circ \xi_w)(t),D_{\uparrow}\overline{y}(t)(w))\Bigr) .
\end{multline*}
\end{theorem}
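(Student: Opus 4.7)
My plan is to mirror the argument of Theorem~\ref{thm:uni EL} but apply the abnormal versions of the isoperimetric necessary conditions available in the cited delta, nabla, and mixed delta--nabla literature. The proof splits into three cases according to the signs of $u$ and $w$, exactly as the authors announce in the paragraph preceding Theorem~\ref{thm:mr:uni}: (i) $u,w>0$; (ii) $u,w<0$; (iii) $\mathrm{sign}(uw)=-1$.

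In case (i), I use Proposition~\ref{prop:Dupf+deriv} to replace $D_{\uparrow}\overline{y}(t)(u)$ by $uy^{\Delta}(t)$ and $D_{\uparrow}\overline{y}(t)(w)$ by $wy^{\Delta}(t)$, together with $(y\circ\xi_u)(t)=u(y\circ\sigma)(t)$ and $(y\circ\xi_w)(t)=w(y\circ\sigma)(t)$, so that $\mathcal{L}[y]$ and $\mathcal{K}[y]$ become delta integrals of $f(t,y^\sigma,y^\Delta):=uL(t,uy^\sigma,uy^\Delta)$ and $g(t,y^\sigma,y^\Delta):=wG(t,wy^\sigma,wy^\Delta)$ respectively. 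This reduces the problem to the standard delta isoperimetric problem from \cite{F:T:09}, whose abnormal Euler--Lagrange conditions give constants $(\lambda_0,\lambda)\neq(0,0)$ such that
\begin{equation*}
\lambda_0\bigl([\partial_3 f]^\Delta-\partial_2 f\bigr)=\lambda\bigl([\partial_3 g]^\Delta-\partial_2 g\bigr).
\end{equation*}
Expanding the chain rule (the constants $u,w$ factor out of $\partial_2$ and $\partial_3$, producing a factor of $u^2$ on the left and $w^2$ on the right after differentiation) and again invoking Proposition~\ref{prop:Dupf+deriv} in the opposite direction yields precisely the claimed identity, after absorbing the positive constants $u,w$ into the multipliers $\lambda_0,\lambda$.

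Case (ii) is entirely analogous, replacing $\sigma$ by $\rho$, $\Delta$ by $\nabla$, and applying the abnormal nabla isoperimetric theorem from \cite{A:T} (together with \cite{NM:T}). Here Proposition~\ref{prop:Dupf+deriv}(b) supplies $D_{\uparrow}\overline{y}(t)(u)=uy^\nabla(t)$ for $u\le 0$, and the same bookkeeping of the constants $u,w<0$ produces the statement. Case (iii) is the only genuinely new ingredient: when $u$ and $w$ have opposite signs, one of the functionals is a delta integral and the other a nabla integral, so the reduction leads to a delta--nabla mixed isoperimetric problem and the appropriate abnormal multiplier rule from \cite{China} is invoked.

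The main obstacle is purely bookkeeping rather than conceptual: one must verify that the factors $u$ and $w$ introduced by the substitutions $y\circ\xi_u$ and $d_u t$ combine correctly through $\partial_2 L$, $\partial_3 L$ and their (delta or nabla) derivatives, and that the resulting common factors may be absorbed into $\lambda_0$ and $\lambda$ without losing the non-triviality condition $(\lambda_0,\lambda)\neq(0,0)$. Since $u,w\neq 0$, no multiplier is killed in the substitution, so the non-triviality is preserved in all three cases. Finally, I will note that case (i) with $\lambda_0=1$ specializes to Theorem~\ref{thm:mr:uni}, so Theorem~\ref{thm:mr:uni:abn} genuinely extends it by covering abnormal extremizers.
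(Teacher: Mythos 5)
Your proposal follows exactly the route the paper itself takes: the paper only sketches this proof, stating that one argues as in Theorem~\ref{thm:uni EL} by splitting into the three sign cases for $u$ and $w$ and invoking the known abnormal isoperimetric results of \cite{F:T:09}, \cite{A:T}, and \cite{China} respectively. Your additional bookkeeping of the factors $u$ and $w$ (absorbed into the multipliers, with non-triviality preserved since $u,w\neq 0$) is correct and in fact supplies detail the paper omits.
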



\section{Conclusions}
\label{sec:conc}

In this paper we claim that the contingent epiderivative,
a concept introduced by Aubin with the help
of his contingent derivative, is an important tool in the
theory of time scales. To show this we propose a new
approach to the calculus of variations on time scales, which allows to unify the
three different approaches followed so far in the literature:
the delta \cite{B:04,F:T:09,AM:T}; the nabla \cite{A:T,A:B:L:06};
and the delta-nabla \cite{China,AM:T:Poland} approach.
Such unification is obtained using, for the first time in the literature of time scales,
the contingent epiderivative as the main differentiation tool.
The effectiveness of our approach is illustrated
by giving general necessary optimality conditions
both for the basic and isoperimetric problems of the calculus
of variations on time scales.
The generalized versions of the Euler-Lagrange
conditions are reduced, in particular cases,
to recent results obtained in the delta and nabla frameworks.

We trust that the present work opens a new area
of research. In particular, it would be interesting
to generalize our results for problems of the calculus
of variations on time scales with higher-order contingent epiderivatives,
unifying the results of \cite{F:T:08} and \cite{NM:T}.
In the case of optimal control problems governed by ODE systems
(controlled ODEs or differential inclusions) and discrete-time systems
there are necessary optimality conditions of different types:
mainly using derivative-type constructions (in particular, epi-derivatives)
and the coderivative type for the Euler-Lagrange inclusions.
These are some interesting questions needing further developments.


\section*{Acknowledgments}

The authors are supported by the R\&D unit
\emph{Centre for Research on Optimization and Control}
(CEOC) via \emph{The Portuguese Foundation for
Science and Technology} (FCT) and the European Community
fund FEDER/POCI 2010. Girejko is also supported by the FCT
post-doc fellowship SFRH/BPD/48439/2008;
Malinowska by Bia{\l}ystok University of Technology,
via a project of the Polish Ministry of Science
and Higher Education \emph{Wsparcie miedzynarodowej mobilnosci naukowcow};
and Torres by the project UTAustin/MAT/0057/2008.




\begin{thebibliography}{99}

\bibitem{A:B:O:P:02}
R. Agarwal, M. Bohner, D. O'Regan and A. Peterson,
Dynamic equations on time scales: a survey.
Dynamic equations on time scales,
{\it J. Comput. Appl. Math.}, vol.~141, no.~1-2, 2002, pp~1--26.

\bibitem{A:T}
R. Almeida and D.F.M. Torres,
Isoperimetric problems on time scales with nabla derivatives,
{\it J. Vib. Control}, vol.~15, no.~6, 2009, pp~951--958.
{\tt arXiv:0811.3650}

\bibitem{A:B:L:06}
F.M. Atici, D.C. Biles and A. Lebedinsky,
An application of time scales to economics,
{\it Math. Comput. Modelling}, vol.~43, no.~7-8,
2006, pp~718--726.

\bibitem{AuCe}
J.P. Aubin and A. Cellina,
{\it Differential Inclusions}, Springer-Verlag, Berlin,
Heidelberg, New York and Tokyo, 1984.

\bibitem{AuFr}
J.P. Aubin and H. Frankowska,
{\it Set-Valued Analysis},
Birkh\"auser Boston Inc., Boston, MA, 1990.

\bibitem{A:H}
B. Aulbach and S. Hilger, A unified approach to continuous and
discrete dynamics. Qualitative theory of differential equations,
{\it Colloq. Math. Soc. J\'{a}nos Bolyai}, 53,
North-Holland, Amsterdam, 1990, pp.~37--56.

\bibitem{B:T:08}
Z. Bartosiewicz and D.F.M. Torres,
Noether's theorem on time scales,
{\it J. Math. Anal. Appl.}, vol.~342, no.~2, 2008, pp~1220--1226.
{\tt arXiv:0709.0400}

\bibitem{BeSo}
E.M. Bednarczuk, W. Song,
Contingent epiderivative and its applications to set-valued optimization,
{\em Control Cybernet.}, vol.~27, 1998, pp~375--386.

\bibitem{B:04}
M. Bohner, Calculus of variations on time scales,
{\it Dynam. Systems Appl.}, vol.~13, no.~3-4, 2004, pp~339--349.

\bibitem{MIA}
M. Bohner, R.A.C. Ferreira and D.F.M. Torres,
Integral inequalities and their applications
to the calculus of variations on time scales,
{\it Math. Inequal. Appl.}, vol.~13, no.~3, 2010, pp~511--522.
{\tt arXiv:1001.3762}

\bibitem{B:P:01}
M. Bohner and A. Peterson,
{\it Dynamic equations on time scales.
An introduction with applications},
Birkh\"{a}user Boston, Inc., Boston, MA, 2001.

\bibitem{B:P:03}
M. Bohner and A. Peterson,
{\it Advances in dynamic equations on time scales},
Birkh\"{a}user Boston, Inc., Boston, MA, 2003.

\bibitem{Di}
C.~Dinu, Convex functions on time scales,
{\it Ann. of the Univ. of Craiova, Math. Comp. Sci. Ser.},
vol.~35, 2008, pp~87--96.

\bibitem{F:T:07}
R.A.C. Ferreira and D.F.M. Torres,
Remarks on the calculus of variations on time scales,
{\it Int. J. Ecol. Econ. Stat.}, vol.~9, no.~F07, 2007,
pp~65--73. {\tt arXiv:0706.3152}

\bibitem{F:T:08}
R.A.C. Ferreira and D.F.M. Torres,
Higher-order calculus of variations on time scales,
{\it in Mathematical control theory and finance},
Springer Berlin, 2008, pp~149--159.
{\tt arXiv:0706.3141}

\bibitem{F:T:09}
R.A.C. Ferreira and D.F.M. Torres,
Isoperimetric problems of the calculus of variations on time scales,
Nonlinear Analysis and Optimization II. Optimization, pp.~123--131,
{\it Contemp. Math.}, Amer. Math. Soc. 514, Providence, RI, 2010.
{\tt arXiv:0805.0278}

\bibitem{China}
E. Girejko, A.B. Malinowska and D.F.M. Torres,
A unified approach to the calculus of variations on time scales,
Proceedings 22nd Chinese Control and Decison Conference (2010 CCDC),
Xuzhou, China, May 26-28, 2010. 
In: IEEE Catalog Number CFP1051D-CDR, 2010, 595--600.
{\tt arXiv:1005.4581}

\bibitem{G:M:W:09}
E. Girejko, D. Mozyrska and M. Wyrwas,
Contingent epiderivatives of functions on time scales,
Workshop in Control, Nonsmooth Analysis,
and Optimization, May 4-8, 2009, Porto, Portugal.

\bibitem{SH}
S. Hilger, Analysis on measure chains---a unified approach
to continuous and discrete calculus, {\it Results Math.},
vol.~18, no.~1-2, 1990, pp~18--56.

\bibitem{JaKh}
J. Jahn, A.A. Khan,
Generalized contingent epiderivatives in set-valued optimization:
Optimality conditions, {\em Numer. Funct. Anal. Optim.},
vol.~23, 2002, pp~807--831.

\bibitem{JaRa}
J. Jahn, R. Rauh,
Contingent epiderivatives and set-valued optimization,
{\em Math. Methods Oper. Res.}, vol.~46, 1997, pp~193--211.

\bibitem{Lak:book}
V. Lakshmikantham, S. Sivasundaram and B. Kaymakcalan,
{\it Dynamic systems on measure chains. Mathematics and its Applications}, 370.
Kluwer Academic Publishers Group, Dordrecht, 1996.

\bibitem{AM:T}
A.B. Malinowska and D.F.M. Torres,
Necessary and sufficient conditions for local Pareto optimality on time scales.
{\it J. Math. Sci. (N. Y.)}, vol.~161, no.~6, 2009, pp~803--810.
{\tt arXiv:0801.2123}

\bibitem{AM:T2}
A.B. Malinowska and D.F.M. Torres,
Strong minimizers of the calculus of variations
on time scales and the Weierstrass condition.
{\it Proc. Est. Acad. Sci.},
vol.~58, no.~4, 2009, pp~205--212.
{\tt arXiv:0905.1870}

\bibitem{basia:Leitmann}
A.B. Malinowska and D.F.M. Torres,
Leitmann's direct method of optimization
for absolute extrema of certain problems
of the calculus of variations on time scales,
{\it Appl. Math. Comput.}, 2010,
in press. DOI:10.1016/j.amc.2010.01.015
{\tt arXiv:1001.1455}

\bibitem{Basia:post_doc_Aveiro:2}
A.B. Malinowska and D.F.M. Torres,
Natural boundary conditions in the calculus of variations,
{\it Math. Meth. Appl. Sci.}, 2010, in press.
DOI:10.1002/mma.1289
{\tt arXiv:0812.0705}

\bibitem{AM:T:Poland}
A.B. Malinowska and D.F.M. Torres,
The delta-nabla calculus of variations,
{\it Fasc. Math.}, no.~44, 2010, pp~75--83.
{\tt arXiv:0912.0494}

\bibitem{NM:T}
N. Martins and D.F.M. Torres,
Calculus of variations on time scales with nabla derivatives,
{\it Nonlinear Anal.}, vol.~71, no.~12, 2009, pp~e763--e773.
{\tt arXiv:0807.2596}

\bibitem{comNatalia}
N. Martins and D.F.M. Torres,
Necessary conditions for linear noncooperative
$N$-player delta differential games on time scales,
{\it Discuss. Math. Differ. Incl. Control Optim.},
vol.~30, no.~2, 2010, in press.
{\tt arXiv:0801.0090}

\bibitem{Boris}
B.S. Mordukhovich,
{\it Variational analysis and generalized differentiation. II},
Springer, Berlin, 2006.

\bibitem{RoSa}
L. Rodr\'{\i}guez-Mar\'{\i}n, M. Sama,
About contingent epiderivatives,
{\em J. Math. Anal. Appl.}, vol.~327, 2007, 745--762.

\end{thebibliography}
\end{document}